\newcommand{\ignore}[1]{}
\newcommand{\oR}{{\mathbb R}}
\newcommand{\oN}{{\mathbb N}}
\journalname{}
\begin{document}

\title{A refined error analysis for fixed-degree polynomial optimization over the simplex}

\author{Zhao Sun}
\institute{Tilburg University \at
              PO Box 90153, 5000 LE Tilburg \\
              Tel.: +31-13-4663313\\
              Fax:  +31-13-4663280\\
              \email{z.sun@uvt.nl}
}

\date{Received: date / Accepted: date}

\maketitle

\begin{abstract}
We consider the problem of minimizing a fixed-degree polynomial  over the standard simplex. This problem is well known to be NP-hard, since it contains the maximum stable set problem in combinatorial optimization as a special case. In this paper, we revisit  a known upper bound obtained by taking the minimum value on a regular grid, and a known lower bound based on P\'olya's representation theorem. More precisely, we consider  the difference between these two bounds and we provide upper bounds for this difference in terms of the range of function values. Our results refine the known upper bounds in  the quadratic and cubic cases, and they asymptotically refine the known upper bound in the general case.
\keywords{Polynomial optimization over the simplex \and Global optimization \and Nonlinear optimization}
\subclass{90C30 \and	90C60 }
\end{abstract}

\section{Introduction and preliminaries}\label{secintro}

\noindent Consider the problem of minimizing a homogeneous polynomial $f\in\oR[x]$ of degree $d$ on the (standard) simplex
$$\Delta_n:=\{x\in\oR_+^n:\sum_{i=1}^nx_i=1\}.$$
That is, the global optimization problem:
\begin{equation}\label{oriprob}
\underline{f}:=\min_{x\in\Delta_n}f(x), \ \ \text{or}\ \ \overline{f}:=\max_{x\in\Delta_n}f(x).
\end{equation}
Here we focus on the problem of computing the minimum $\underline f$ of $f$ over $\Delta_n$.
This problem is well known to be NP-hard, as it contains the maximum stable set problem as a special case (when $f$ is quadratic). Indeed, given a graph $G=(V,E)$ with adjacency matrix $A$, Motzkin and Straus \cite{MS} show that  the maximum stability number $\alpha(G)$ can be obtained by
$${1\over \alpha(G)}=\min_{x\in\Delta_{|V|}}x^T(I+A)x,$$
where  $I$ denotes the identity matrix. Moreover, one can w.l.o.g. assume $f$ is homogeneous. Indeed, if $f=\sum_{s=0}^{d}f_s$, where $f_s$ is homogeneous of degree $s$, then $\min_{x\in\Delta_n}f(x)=\min_{x\in\Delta_n}f'(x)$, setting $f'=\sum_{s=0}^d f_s\left(\sum_{i=1}^n x_i\right)^{d-s}.$\\

\noindent For problem (\ref{oriprob}), many approximation algorithms have been studied in the literature. In fact, when  $f$ has fixed degree $d$, there is a polynomial time approximation scheme (PTAS) for this problem, see  \cite{BK02} for the case $d=2$ and \cite{KLP06,KLS13} for  $d\ge 2$. For more results on its computational complexity, we refer to \cite{EDK08,KHE08}.\\

\noindent
We consider the following two bounds for $\underline{f}$: an upper bound $f_{\Delta(n,r)}$ obtained by taking the minimum value on a regular grid and a lower bound $f_{\min}^{(r-d)}$ based on P\'olya's representation theorem. They both have been studied in the literature, see e.g. \cite{BK02,KLP06,KLS13} for $f_{\Delta(n,r)}$ and \cite{KLP06,SY13,YEA12} for $f_{\min}^{(r-d)}$.
The two ranges $f_{\Delta(n,r)}-\underline f$ and $\underline f- f_{\min}^{(r-d)}$ have been  studied  separately and upper bounds for each of  them have been
shown in the above mentioned works.\\

\noindent In this paper, we study these two ranges at the same time. More precisely,  we analyze the  larger range $f_{\Delta(n,r)}-f_{\min}^{(r-d)}$ 
and provide upper bounds for it in terms of the range of function values $\overline{f}-\underline{f}$.
Of course, upper bounds for the range $f_{\Delta(n,r)}-f_{\min}^{(r-d)}$ can be obtained by combining the known upper bounds for each of the two ranges $f_{\Delta(n,r)}-\underline f$ and $\underline f- f_{\min}^{(r-d)}$.
Our new upper bound for  $f_{\Delta(n,r)}-f_{\min}^{(r-d)}$ refines these known bounds in the quadratic and cubic cases  and provide an asymptotic refinement for general degree $d$.


\subsubsection*{Notation}
\noindent Throughout $\mathcal{H}_{n,d}$ denotes the set of all homogeneous polynomials in $n$ variables with degree $d$. We let $[n]:=\{1,2,\ldots,n\}$. We denote $\oR^n_+$ as the set of all nonnegative real vectors, and $\oN^n$ as the set of all nonnegative integer vectors. For $\alpha\in\oN^n$, we define $|\alpha|:=\sum_{i=1}^n\alpha_i$ and $\alpha!:=\alpha_1!\alpha_2!\cdots\alpha_n!$. We denote $I(n,d):=\{\alpha\in \oN^n: |\alpha|=d\}$. We let $e$ denote the all-ones vector and $e_i$ denote the $i$-th standard unit vector. We denote $\oR[x]$ as the set of all multivariate polynomials in $n$ variables (i.e. $x_1,x_2\dots,x_n$) and denote $\mathcal{H}_{n,d}$ as the set of all multivariate homogeneous polynomials in $n$ variables with degree $d$. For $\alpha\in \oN^n$, we denote $x^{\alpha}:=\prod_{i=1}^nx_i^{\alpha_i}$, while for $I\subseteq[n]$, we let $x^{I}:=\prod_{i\in I}x_i$. Moreover, we denote $x^{\underline{d}}:=x(x-1)(x-2)\cdots (x-d+1)$ for integer $d\ge0$ and $x^{\underline{\alpha}}:=\prod_{i=1}^nx_{i}^{\underline{{\alpha}_i}}$ for $\alpha\in \oN^n$.
Thus, $x^{\underline d}=0$ if $x$ is an integer with $0\le x\le d-1$.

\subsubsection*{Upper bounds using regular grids}

\noindent One can construct an upper bound for $\underline{f}$ by taking the minimum of $f$ on the regular grid
$$\Delta(n,r):=\{x\in\Delta_n:rx\in\oN^n\},$$
for an integer $r\ge0$. We define
\begin{equation*}
f_{\Delta(n,r)}:=\min_{x\in\Delta(n,r)}f(x).
\end{equation*}

\noindent Obviously, $\underline{f}\le f_{\Delta(n,r)}\le \overline{f}$, and $f_{\Delta(n,r)}$ can be computed by $|\Delta(n,r)| = {n+r-1 \choose r}$ evaluations of $f$.  In fact, when considering polynomials $f$ of fixed degree $d$, the parameters $f_{\Delta(n,r)}$ (with increasing values of $r$) provide  a PTAS for (\ref{oriprob}), as was proved by Bomze and de Klerk \cite{BK02} (for $d=2$), and by de Klerk et al. \cite{KLP06} (for  $d\ge 2$). Recently, de Klerk et al. \cite{KLS13} provide an alternative proof for this PTAS and refine the error bound for $f_{\Delta(n,r)}-\underline{f}$ from \cite{KLP06} for cubic $f$.

\noindent In addition, some researchers study the properties of the regular grid $\Delta(n,r)$. For instance, given a point $x \in \Delta_n$, Bomze et al. \cite{BGY} show a scheme to find the closest point to $x$ on $\Delta(n,r)$ with respect to some class of norms including $\ell_p$-norms for $p\ge 1$.

\subsubsection*{Lower bounds based on P\'olya's representation theorem}

\noindent
Given a polynomial $f\in\mathcal{H}_{n,d}$,
P\'olya \cite{Pol74} shows that if $f$  is positive over the simplex $\Delta_n$,
then the polynomial $(\sum_{i=1}^ nx_i)^r f$ has nonnegative coefficients for any $r$ large enough (see \cite{PR01} for an explicit  bound for $r$). Based on this result of P\'olya, 
an asymptotically converging hierarchy of lower bounds for $\underline{f}$ can be constructed as follows: for any integer $r\ge d$, we define the parameter $f_{\min}^{(r-d)}$ as
\begin{equation}\label{fminr}
f_{\min}^{(r-d)}:=\max \lambda \ \ \text{s.t.}\ \ \left(\sum_{i=1}^nx_i\right)^{r-d}\left(f-\lambda\left(\sum_{i=1}^nx_i\right)^d\right)\ \
\text{has nonnegative coefficients.}
\end{equation}

\noindent Notice that $\underline{f}$ can be equivalently formulated as
\begin{equation*}
\underline{f}=\max\ \ \lambda \ \ \text{s.t.}\ \ f(x)-\lambda\left(\sum_{i=1}^nx_i\right)^d\ge 0\ \ \forall x\in\oR^n_+.
\end{equation*}

\noindent Then, one can easily check the following inequalities:
\begin{equation*}\label{inequ1}
f_{\min}^{(0)}\le f_{\min}^{(1)}\le\cdots \le\underline{f}\le f_{\Delta(n,r)}\le \overline{f}.
\end{equation*}
Parrilo \cite{Par00,Par03} first introduces the idea of applying P\'olya's representation theorem to construct hierarchical approximations in copositive optimization. De Klerk et al. \cite{KLP06} consider $f_{\min}^{(r-d)}$ and show upper bounds for $\underline{f}-f_{\min}^{(r-d)}$ in terms of $\overline{f}-\underline{f}$. Furthermore, Yildirim \cite{YEA12} and Sagol and Yildirim \cite{SY13} analyze error bounds for $f_{\min}^{(r-2)}$ for quadratic $f$.

\noindent Now we give an explicit formula for the parameter $f_{\min}^{(r-d)}$,  which follows from  \cite[relation (3)]{PR01}; note that  the quadratic case of this formula has also been observed in \cite{PVZ07,SY13,YEA12}.

\begin{lemma}\label{lemrf}
For $f=\sum_{\beta\in I(n,d)}f_{\beta}x^{\beta}\in\mathcal{H}_{n,d}$, one has
\begin{equation}\label{fminrd}
f_{\min}^{(r-d)}=\min_{\alpha\in I(n,r)}\sum_{\beta\in I(n,d)}f_{\beta}{ \alpha^{\underline{\beta}} \over r^{\underline{d}}}.
\end{equation}
 \end{lemma}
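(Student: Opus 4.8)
The plan is to compute explicitly the coefficients of the polynomial $\left(\sum_{i=1}^n x_i\right)^{r-d}\left(f-\lambda\left(\sum_{i=1}^n x_i\right)^d\right)$ and then read off the largest $\lambda$ for which they are all nonnegative. Expanding $\left(\sum_i x_i\right)^d=\sum_{\beta\in I(n,d)}\frac{d!}{\beta!}x^\beta$ and $\left(\sum_i x_i\right)^{r-d}=\sum_{\gamma\in I(n,r-d)}\frac{(r-d)!}{\gamma!}x^\gamma$ by the multinomial theorem, I would collect, for each $\alpha\in I(n,r)$, the coefficient $c_\alpha$ of the monomial $x^\alpha$:
\[
c_\alpha=\sum_{\beta\in I(n,d),\,\beta\le\alpha}\frac{(r-d)!}{(\alpha-\beta)!}\left(f_\beta-\lambda\frac{d!}{\beta!}\right),
\]
where $\beta\le\alpha$ abbreviates $\beta_i\le\alpha_i$ for all $i$ (so that $\gamma=\alpha-\beta$ lies in $I(n,r-d)$, which forces $|\gamma|=r-d$ automatically). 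By definition, the product has nonnegative coefficients precisely when $c_\alpha\ge0$ for every $\alpha\in I(n,r)$, so the task reduces to simplifying each $c_\alpha$.

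I would then treat the two parts of $c_\alpha$ separately. For the coefficient of $-\lambda$, writing $\binom{\alpha}{\beta}:=\prod_{i=1}^n\binom{\alpha_i}{\beta_i}=\frac{\alpha!}{\beta!\,(\alpha-\beta)!}$ and using the Vandermonde convolution $\sum_{\beta\le\alpha,\,|\beta|=d}\binom{\alpha}{\beta}=\binom{|\alpha|}{d}=\binom{r}{d}$ (read off as the coefficient of $t^d$ in $\prod_i(1+t)^{\alpha_i}=(1+t)^r$), I obtain
\[
\sum_{\beta\le\alpha,\,|\beta|=d}\frac{(r-d)!}{(\alpha-\beta)!}\cdot\frac{d!}{\beta!}=\frac{(r-d)!\,d!}{\alpha!}\binom{r}{d}=\frac{r!}{\alpha!}.
\]
For the $f$-part I would invoke the identity $\frac{\alpha!}{(\alpha-\beta)!}=\alpha^{\underline\beta}$, valid when $\beta\le\alpha$, together with the fact recorded in the Notation section that $\alpha^{\underline\beta}=0$ whenever some $\beta_i>\alpha_i$; this lets me remove the constraint $\beta\le\alpha$ and write $\sum_{\beta\le\alpha}\frac{(r-d)!}{(\alpha-\beta)!}f_\beta=\frac{(r-d)!}{\alpha!}\sum_{\beta\in I(n,d)}f_\beta\,\alpha^{\underline\beta}$.

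Putting the two pieces together and recognizing $\frac{r!}{(r-d)!}=r^{\underline d}$, I arrive at
\[
c_\alpha=\frac{(r-d)!}{\alpha!}\left(\sum_{\beta\in I(n,d)}f_\beta\,\alpha^{\underline\beta}-\lambda\,r^{\underline d}\right).
\]
Since $r\ge d$ makes the prefactor $(r-d)!/\alpha!$ strictly positive, the inequality $c_\alpha\ge0$ is equivalent to $\lambda\le\sum_{\beta\in I(n,d)}f_\beta\,\alpha^{\underline\beta}/r^{\underline d}$. Imposing this for all $\alpha\in I(n,r)$ and taking the maximal feasible $\lambda$ yields exactly $f_{\min}^{(r-d)}=\min_{\alpha\in I(n,r)}\sum_{\beta\in I(n,d)}f_\beta\,\alpha^{\underline\beta}/r^{\underline d}$, as claimed. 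The only step that is not pure bookkeeping is the Vandermonde simplification producing the clean value $r!/\alpha!$ for the $\lambda$-coefficient; I expect that, together with the careful use of the falling-factorial identity to drop the summation constraint, to be the point requiring the most care.
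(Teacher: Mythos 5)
Your proposal is correct and follows essentially the same route as the paper: expand $\left(\sum_i x_i\right)^{r-d}\bigl(f-\lambda\left(\sum_i x_i\right)^d\bigr)$ via the multinomial theorem, identify the coefficient of each $x^\alpha$ as $\frac{r!}{\alpha!}\bigl(\sum_{\beta}f_\beta\alpha^{\underline\beta}/r^{\underline d}-\lambda\bigr)$, and read off the optimal $\lambda$. The only difference is that you justify the coefficient identity in detail (the Vandermonde convolution for the $\lambda$-part and the use of $\alpha^{\underline\beta}=0$ when $\beta\not\le\alpha$ to drop the summation constraint), whereas the paper asserts it in a single displayed step.
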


\begin{proof}
By using  the multinomial theorem $(\sum_{i=1}^n x_i)^d=\sum_{\alpha\in I(n,d)}{d!\over \alpha!}x^{\alpha}$, we obtain
\begin{eqnarray*}
\left(\sum_{i=1}^nx_i\right)^{r-d}f-\lambda \left(\sum_{i=1}^nx_i\right)^{r}&=&
\left(\sum_{\gamma\in I(n,r-d)}{(r-d)!\over \gamma!}x^{\gamma}\right)\left(\sum_{\beta\in I(n,d)}f_{\beta}x^{\beta}\right)-\lambda\left(\sum_{\alpha\in I(n,r)}{r!\over\alpha!}x^{\alpha}\right)\\
&=&\sum_{\alpha\in I(n,r)}\left(\sum_{\beta\in I(n,d)}f_{\beta}\alpha^{\underline{\beta}}{1\over r^{\underline{d}}}\right){r!\over\alpha!}x^{\alpha}-\lambda\left(\sum_{\alpha\in I(n,r)}{r!\over\alpha!}x^{\alpha}\right)\\
&=&\sum_{\alpha\in I(n,r)}\left(\sum_{\beta\in I(n,d)}f_{\beta}\alpha^{\underline{\beta}}{1\over r^{\underline{d}}}-\lambda\right){r!\over\alpha!}x^{\alpha}.
\end{eqnarray*}
\noindent Hence, by definition (\ref{fminr}), we obtain
\begin{eqnarray*}
f_{\min}^{(r-d)}&=&\max\ \ \lambda \ \ \text{s.t} \ \ \sum_{\beta\in I(n,d)}f_{\beta}\alpha^{\underline{\beta}}{1\over r^{\underline{d}}}-\lambda\ge 0\ \ \forall \alpha\in I(n,r)\\
&=& \min \sum_{\beta\in I(n,d)}f_{\beta}\alpha^{\underline{\beta}}{1\over r^{\underline{d}}}\ \ \text{s.t}\ \ \alpha\in I(n,r).
\end{eqnarray*}
\qed
\end{proof}

\noindent Similarly as $f_{\Delta(n,r)}$, by (\ref{fminrd}), the computation of $f_{\min}^{(r-d)}$ requires $|I(n,r)| = {n+r-1 \choose r}$ evaluations of the polynomial $\sum_{\beta\in I(n,d)}f_{\beta}\alpha^{\underline{\beta}}{1\over r^{\underline{d}}}$.

\subsubsection*{Bernstein coefficients}
\noindent For any  polynomial $f=\sum_{\beta\in I(n,d)}f_{\beta}x^{\beta}\in\mathcal{H}_{n,d}$, we can write it as
\begin{equation}\label{berncoef}
f=\sum_{\beta\in I(n,d)}f_{\beta}x^{\beta}=\sum_{\beta\in I(n,d)}\left(f_{\beta}{\beta!\over d!}\right){d!\over \beta!}x^{\beta}.
\end{equation}

\noindent For any $\beta\in I(n,d)$, we call $f_{\beta}{\beta!\over d!}$ the {\em Bernstein coefficients} of $f$ (this terminology has also been used in \cite{KL10,KLS13}), since they are the coefficients of the polynomial $f$ when $f$ is expressed in the Bernstein basis $\{{d!\over \beta!}x^\beta: \beta\in I(n,d)\}$ of $\mathcal{H}_{n,d}$. Applying the multinomial theorem together with (\ref{berncoef}), one can obtain that  when evaluating $f$ at a point $x\in \Delta_n$, $f(x)$ is a convex combination of
the  Bernstein coefficients $f_{\beta}{\beta!\over d!}$. Therefore, we have
\begin{equation}\label{reprop1}
\min_{\beta\in I(n,d)}f_{\beta}{\beta!\over d!}\le\underline{f}\le f_{\Delta(n,r)}\le\overline{f}\le\max_{\beta\in I(n,d)}f_{\beta}{\beta!\over d!}.
\end{equation}

\noindent For the analysis in Section \ref{secgene}, we need the following result of \cite{KLP06}, which bounds the range of the Bernstein coefficients  of $f$ in terms of its range of values $\overline{f}-\underline{f}$.

\begin{theorem}\label{thmgene}\cite[Theorem 2.2]{KLP06}
For any polynomial $f=\sum_{\beta\in I(n,d)}f_{\beta}x^{\beta}\in\mathcal{H}_{n,d}$, one has
\begin{equation*}
\max_{\beta\in I(n,d)}f_{\beta}{\beta!\over d!}-\min_{\beta\in I(n,d)}f_{\beta}{\beta!\over d!}\le {2d-1\choose d}d^d (\overline{f}-\underline{f}).
\end{equation*}
\end{theorem}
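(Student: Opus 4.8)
The plan is to express each Bernstein coefficient $c_\beta:=f_\beta\,\beta!/d!$ as an explicit affine combination of values of $f$ at simplex points, with coefficients that sum to $1$ and whose $\ell_1$-norm is independent of both $n$ and $\beta$. The starting observation is that, since $f$ is homogeneous of degree $d$ and $|\beta|=d$, the pure mixed partial derivative $\partial^\beta f:=\partial_1^{\beta_1}\cdots\partial_n^{\beta_n}f$ is the constant $\beta!f_\beta=d!\,c_\beta$. The key point is that this constant is computed \emph{exactly} by a mixed forward difference: writing $(D_ig)(x):=g(x+e_i)-g(x)$ and $D^\beta:=\prod_{i=1}^nD_i^{\beta_i}$, I claim $D^\beta f\equiv\beta!f_\beta$. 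I would prove this on monomials: for $|\gamma|=d$, the operator $D_i^{\beta_i}$ annihilates $x_i^{\gamma_i}$ when $\beta_i>\gamma_i$ and otherwise returns a polynomial of degree $\gamma_i-\beta_i$, so $D^\beta x^\gamma$ survives only when $\gamma\ge\beta$ componentwise; combined with $|\gamma|=|\beta|=d$ this forces $\gamma=\beta$, and then $D^\beta x^\beta=\prod_i\beta_i!=\beta!$.

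Expanding $D^\beta$ and evaluating the resulting constant at the origin gives
\begin{equation*}
c_\beta=\frac1{d!}\sum_{0\le\delta\le\beta}(-1)^{d-|\delta|}{\beta\choose\delta}f(\delta),\qquad {\beta\choose\delta}:=\prod_{i=1}^n{\beta_i\choose\delta_i}.
\end{equation*}
I would then invoke homogeneity: for $\delta\neq0$ we have $f(\delta)=|\delta|^d f(\delta/|\delta|)$ with $\delta/|\delta|\in\Delta_n$, while the $\delta=0$ term vanishes. Hence $c_\beta=\sum_\delta a_\delta\,y_\delta$, where $a_\delta:=\frac1{d!}(-1)^{d-|\delta|}{\beta\choose\delta}|\delta|^d$ and $y_\delta:=f(\delta/|\delta|)\in[\underline f,\overline f]$. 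Evaluating this identity on $f=(\sum_ix_i)^d$, whose Bernstein coefficients all equal $1$ and which takes the value $1$ at every simplex point, shows $\sum_\delta a_\delta=1$.

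With an affine combination in hand, I would bound the range of the $c_\beta$ by the $\ell_1$-norm of the coefficient vector. For any two multi-indices $\beta,\beta'$ the difference $c_\beta-c_{\beta'}$ equals $\sum_\delta w_\delta\,y_\delta$ with $\sum_\delta w_\delta=0$; subtracting the midpoint $(\overline f+\underline f)/2$ (which is legitimate because the weights sum to zero) and applying the triangle inequality gives $|c_\beta-c_{\beta'}|\le\frac{\overline f-\underline f}{2}\big(\|a^{(\beta)}\|_1+\|a^{(\beta')}\|_1\big)$. The crucial computation is that this $\ell_1$-norm is dimension-free: taking absolute values removes the signs $(-1)^{d-|\delta|}$ and leaves the nonnegative terms ${\beta\choose\delta}|\delta|^d$, so $\|a^{(\beta)}\|_1=\frac1{d!}\sum_{0\le\delta\le\beta}{\beta\choose\delta}|\delta|^d$. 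The generating-function identity $\sum_{0\le\delta\le\beta}{\beta\choose\delta}t^{|\delta|}=\prod_i(1+t)^{\beta_i}=(1+t)^d$ collapses the inner sum over a fixed degree $|\delta|=m$ to ${d\choose m}$, independently of $\beta$ and $n$, whence $\|a^{(\beta)}\|_1=\frac1{d!}\sum_{m=0}^d{d\choose m}m^d=:C$ for every $\beta$. Therefore $\max_\beta c_\beta-\min_\beta c_\beta\le C(\overline f-\underline f)$.

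It then remains to compare $C$ with ${2d-1\choose d}d^d$. The bound $m^d\le d^d$ yields $C\le \frac{2^d d^d}{d!}$, and since the $d$ factors $d,d+1,\dots,2d-1$ are each at least $2$ one has ${2d-1\choose d}=\frac{(2d-1)!}{d!\,(d-1)!}\ge\frac{2^d}{d!}$, so $C$ lies far below ${2d-1\choose d}d^d$; in fact this route proves the theorem with a substantially smaller constant. The main obstacle is the finite-difference step: correctly setting up the representation and verifying that only the monomial $\gamma=\beta$ survives. Once that exact identity is established, the dimension-independence of $\|a^{(\beta)}\|_1$ — the generating-function collapse that makes the bound free of $n$ — is the decisive and somewhat surprising simplification, and the remaining estimates are routine.
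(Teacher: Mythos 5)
Your proof is correct, but note that the paper itself does not prove this statement at all: Theorem \ref{thmgene} is quoted verbatim from \cite[Theorem 2.2]{KLP06}, so there is no in-paper argument to compare against. Your route — computing the Bernstein coefficient $c_\beta=f_\beta\beta!/d!$ exactly as the mixed forward difference $\frac{1}{d!}\sum_{0\le\delta\le\beta}(-1)^{d-|\delta|}{\beta\choose\delta}f(\delta)$, rescaling each $f(\delta)$ to a simplex value by homogeneity, checking the weights sum to $1$ via $f=(\sum_i x_i)^d$, and then bounding the range by the $\ell_1$-norm of the weights — is sound at every step, and the generating-function collapse $\sum_{\delta\le\beta}{\beta\choose\delta}t^{|\delta|}=(1+t)^d$ correctly makes the norm $\frac{1}{d!}\sum_{m=0}^d{d\choose m}m^d$ independent of $n$ and $\beta$. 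This actually yields a constant of order $(2d)^d/d!$, which is substantially smaller than ${2d-1\choose d}d^d$ (the latter grows like $4^dd^d/\sqrt{d}$), so you prove more than is claimed; the final comparison step is the only place needing a touch-up, since the justification ``the $d$ factors $d,\dots,2d-1$ are each at least $2$'' requires $d\ge2$ (for $d=1$ both constants equal $1$ and the statement is trivial). The original argument in \cite{KLP06} likewise expresses the coefficients as combinations of grid values but with cruder counting, which is where the larger constant comes from; your dimension-free $\ell_1$ computation is the genuine improvement.
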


\subsubsection*{Contribution of the paper}
\noindent In this paper, we consider upper bounds for $f_{\Delta(n,r)}-f_{\min}^{(r-d)}$ in terms of $\overline{f}-\underline{f}$. More precisely, we provide 
tighter  upper bounds in  the quadratic, cubic, and square-free (aka multilinear) cases and,  in the general case $d\ge 2$,
our upper bounds are asymptotically tighter when $r$ is large enough.   We will apply the formula (\ref{fminrd}) directly for the quadratic, cubic and square-free cases, while for the general case we will use Theorem \ref{thmgene}.

\noindent There are some relevant results in the literature. De Klerk et al. \cite{KLP06} give upper bounds for $f_{\Delta(n,r)}-\underline{f}$ (the upper bound for cubic $f$ has been refined by de Klerk et al. \cite{KLS13}) and for $\underline{f}-f_{\min}^{(r-d)}$ in terms of $\overline{f}-\underline{f}$, and by adding them up one can easily derive upper bounds for $f_{\Delta(n,r)}-f_{\min}^{(r-d)}$. Furthermore, for quadratic polynomial $f$, Yildirim \cite{YEA12} considers the  upper bound $\min_{k\le r} f_{\Delta(n,k)}$ for $\underline{f}$ (for  $r \ge 2$)  and  upper bounds the range   $\min_{k\le r} f_{\Delta(n,k)}-f_{\min}^{(r-d)}$ in terms of $\overline{f}-\underline{f}$. Our results in this paper refine the results in \cite{KLP06,KLS13,YEA12} for the quadratic and cubic cases (see Sections \ref{secdeg2} and \ref{secdeg3} respectively), while for the general case our result refines the result of \cite{KLP06} when $r$ is sufficiently large (see Section~\ref{secgene}).

\subsubsection*{Structure}
\noindent The paper is organized as follows. In Sections \ref{secdeg2} and \ref{secdeg3}, we consider the quadratic and cubic cases respectively, and refine the relevant results obtained from \cite{KLP06,KLS13,YEA12}.
Then, we look at the square-free (aka multilinear) case in Section \ref{secsqfree}. Moreover, in Section \ref{secgene}, we consider general (fixed-degree) polynomials and compare our new result with the one of \cite{KLP06}.

\section{The quadratic case}\label{secdeg2}
\noindent For any quadratic polynomial $f$, we consider the range $f_{\Delta(n,r)}-f_{\min}^{(r-2)}$ and derive the following 
upper bound in terms of $\overline{f}-\underline{f}$.
\begin{theorem}\label{thmqua}
For any quadratic $f=x^TQx$ and $r\ge 2$, one has
\begin{eqnarray}
f_{\Delta(n,r)}-f_{\min}^{(r-2)}\le {1\over r-1}(Q_{\max}-f_{\Delta(n,r)}) \le {1\over r-1}(\overline{f}-\underline{f})\label{eqn01},
\end{eqnarray}
where $Q_{\max}:=\max_{i\in[n]} Q_{ii}$.
\end{theorem}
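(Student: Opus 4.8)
The plan is to derive, from the explicit formula of Lemma~\ref{lemrf}, a clean closed form for $f_{\min}^{(r-2)}$ over the grid $\Delta(n,r)$ that can be compared directly with $f_{\Delta(n,r)}$. Writing $f=x^TQx$ in the monomial basis gives $f_\beta=Q_{ii}$ for $\beta=2e_i$ and $f_\beta=2Q_{ij}$ for $\beta=e_i+e_j$ with $i\ne j$. First I would substitute these into~(\ref{fminrd}) with $d=2$, using $r^{\underline 2}=r(r-1)$, $\alpha_i^{\underline 2}=\alpha_i(\alpha_i-1)$ and $\alpha^{\underline{e_i+e_j}}=\alpha_i\alpha_j$; after collecting terms the inner sum should reassemble into $\alpha^TQ\alpha-\sum_{i=1}^nQ_{ii}\alpha_i$, the diagonal entries each having lost one factor $\alpha_i$.

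Next I would pass to the variable $x=\alpha/r\in\Delta(n,r)$, so that $\alpha^TQ\alpha=r^2f(x)$ and $\sum_iQ_{ii}\alpha_i=r\sum_iQ_{ii}x_i$. This turns the closed form into the identity
\begin{equation*}
f_{\min}^{(r-2)}=\min_{x\in\Delta(n,r)}\frac{rf(x)-\sum_{i=1}^nQ_{ii}x_i}{r-1},
\end{equation*}
which is the crux of the whole argument. From here both inequalities in~(\ref{eqn01}) follow from elementary estimates on the numerator. For the first, I would bound the two parts separately over the grid: $f(x)\ge f_{\Delta(n,r)}$ holds by definition of $f_{\Delta(n,r)}$, while $\sum_iQ_{ii}x_i\le Q_{\max}$ since $x\ge 0$ and $\sum_ix_i=1$ make the left-hand side a convex combination of the diagonal entries. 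Together these give $f_{\min}^{(r-2)}\ge(rf_{\Delta(n,r)}-Q_{\max})/(r-1)$, and subtracting from $f_{\Delta(n,r)}$ yields precisely $\frac{1}{r-1}(Q_{\max}-f_{\Delta(n,r)})$.

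For the second inequality it is enough to verify $Q_{\max}-f_{\Delta(n,r)}\le\overline f-\underline f$. I would use $f_{\Delta(n,r)}\ge\underline f$ on one side and $Q_{\max}\le\overline f$ on the other; the latter follows from $Q_{ii}=f(e_i)$ with $e_i\in\Delta_n$, so every diagonal entry is a value attained by $f$ on the simplex and hence at most $\overline f$.

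The hard part will be the bookkeeping in the first step: correctly propagating the falling-factorial weights $\alpha^{\underline\beta}$ and $r^{\underline d}$ through the symmetric expansion of $x^TQx$, and checking that the off-diagonal terms recombine into $\alpha^TQ\alpha$ while the diagonal terms contribute the extra $-\sum_iQ_{ii}\alpha_i$. Once the displayed identity for $f_{\min}^{(r-2)}$ is established, everything else reduces to the two convexity-type bounds above and requires no further computation.
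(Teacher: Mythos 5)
Your proposal is correct and follows essentially the same route as the paper's proof: both reduce, via Lemma~\ref{lemrf}, to the identity $f_{\min}^{(r-2)}=\min_{\alpha\in I(n,r)}\frac{1}{r(r-1)}\bigl[f(\alpha)-\sum_{i=1}^nQ_{ii}\alpha_i\bigr]$, then split the minimum of the difference into $\min f$ minus $\max\sum_iQ_{ii}x_i\le Q_{\max}$, and finish with $Q_{ii}=f(e_i)\le\overline f$ and $f_{\Delta(n,r)}\ge\underline f$. The only difference is cosmetic normalization (dividing by $r-1$ rather than multiplying by $(r-1)/r$).
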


\begin{proof}
By (\ref{fminrd}), we have $$f_{\min}^{(r-2)}=\min_{\alpha\in I(n,r)}{1\over r(r-1)}\left[f(\alpha)-\sum_{i=1}^nQ_{ii}\alpha_i\right].$$
Hence, ${r-1\over r}f_{\min}^{(r-2)}=\min_{\alpha\in I(n,r)}\left[f({\alpha\over r})-\sum_{i=1}^nQ_{ii}{\alpha_i\over r}{1\over r}\right].$ We obtain
\begin{equation}
{r-1\over r}f_{\min}^{(r-2)}\ge
\min_{\alpha\in I(n,r)}f({\alpha\over r})-\max_{\alpha\in I(n,r)}{1\over r}\sum_{i=1}^nQ_{ii}{\alpha_i\over r}
= f_{\Delta(n,r)}-{1\over r} Q_{\max}.\label{re4}
\end{equation}
One can easily obtain the first inequality in (\ref{eqn01}) by (\ref{re4}). For the second inequality in (\ref{eqn01}), we use the fact that $ Q_{\max} \le \overline{f}$ (since $Q_{ii}=f(e_i)\le \overline{f}$ for $i\in [n]$), as well as the fact that $f_{\Delta(n,r)}\ge\underline{f}$.
\qed
\end{proof}

\noindent Now we point out  that our result (\ref{eqn01}) refines the relevant result of \cite{KLP06}. De Klerk et al. \cite{KLP06} show the following theorem.
\begin{theorem}\label{klpthmdeg2}\cite[Theorem 3.2]{KLP06}
Suppose $f\in \mathcal{H}_{n,2}$ and $r\ge 2$. Then
\begin{equation}\label{re8}
\underline{f}-f_{\min}^{(r-2)}\le {1\over r-1}(\overline{f}-\underline{f}),
\end{equation}
\begin{equation}\label{re21}
f_{\Delta(n,r)}-\underline{f}\le {1\over r}(\overline{f}-\underline{f}).
\end{equation}
\end{theorem}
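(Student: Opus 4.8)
The statement bundles two inequalities, and I would prove them by separate arguments, in both cases exploiting that a quadratic $f=x^TQx$ is controlled by the entries of $Q$, with $Q_{ii}=f(e_i)$ on the diagonal. For the lower-bound estimate (\ref{re8}), the plan is to start from the explicit formula (\ref{fminrd}) of Lemma \ref{lemrf} specialized to $d=2$. As already recorded in the proof of Theorem \ref{thmqua}, this gives
$$f_{\min}^{(r-2)}=\frac{1}{r-1}\min_{\alpha\in I(n,r)}\left[r\,f\!\left(\tfrac{\alpha}{r}\right)-\sum_{i=1}^n Q_{ii}\,\tfrac{\alpha_i}{r}\right].$$
I would then bound the two pieces inside the minimum separately: each grid point $\alpha/r$ lies in $\Delta_n$, so $f(\alpha/r)\ge\underline f$, while $\sum_i Q_{ii}(\alpha_i/r)$ is a convex combination of the values $Q_{ii}=f(e_i)\le\overline f$ and is therefore at most $\overline f$. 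Hence $f_{\min}^{(r-2)}\ge(r\underline f-\overline f)/(r-1)$, and rearranging this yields exactly (\ref{re8}). This part is essentially a direct substitution once (\ref{fminrd}) is written out for $d=2$.

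For the grid estimate (\ref{re21}), I would use a probabilistic sampling argument. Let $x^*\in\Delta_n$ attain $f(x^*)=\underline f$, and draw $r$ independent indices from $[n]$ according to the probability vector $x^*$; letting $\alpha_i$ count how often $i$ is drawn gives a multinomially distributed $\alpha\in I(n,r)$, so that $\alpha/r$ is a random point of $\Delta(n,r)$. The heart of the argument is the computation of $\mathbb{E}[f(\alpha/r)]$. Using the multinomial moments $\mathbb{E}[\alpha_i\alpha_j]=r(r-1)x_i^*x_j^*$ for $i\neq j$ and $\mathbb{E}[\alpha_i^2]=rx_i^*+r(r-1)(x_i^*)^2$, the quadratic expectation should collapse to
$$\mathbb{E}\!\left[f\!\left(\tfrac{\alpha}{r}\right)\right]=\frac{r-1}{r}\,f(x^*)+\frac{1}{r}\sum_{i=1}^n Q_{ii}\,x_i^*.$$
Since $\sum_i Q_{ii}x_i^*$ is again a convex combination of the $f(e_i)\le\overline f$, the right-hand side is at most $\tfrac{r-1}{r}\underline f+\tfrac{1}{r}\overline f$. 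Because $f_{\Delta(n,r)}\le\mathbb{E}[f(\alpha/r)]$ (a minimum over the grid cannot exceed an average over grid points), rearranging gives (\ref{re21}).

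I expect the main obstacle to be the moment computation in the second part. One must carefully separate the diagonal ($i=j$) and off-diagonal contributions, keep track of the extra $rx_i^*$ term arising from the gap between $\mathbb{E}[\alpha_i^2]$ and $(\mathbb{E}[\alpha_i])^2$, and check that this stray linear part reassembles precisely as $\tfrac{1}{r}\sum_i Q_{ii}x_i^*$, so that it can be absorbed into $\overline f$. By comparison, the derivation of (\ref{re8}) is routine once Lemma \ref{lemrf} has been specialized to the quadratic case.
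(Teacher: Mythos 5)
Your proof is correct, but note that the paper itself gives no proof of Theorem~\ref{klpthmdeg2}: it is quoted verbatim from \cite[Theorem 3.2]{KLP06} purely for comparison with the new bound (\ref{eqn01}). So the only comparison available is with the surrounding material. Your derivation of (\ref{re8}) is essentially the same computation that the paper performs inside the proof of Theorem~\ref{thmqua}: specializing (\ref{fminrd}) to $d=2$ gives $f_{\min}^{(r-2)}=\min_{\alpha\in I(n,r)}\frac{1}{r(r-1)}\bigl[f(\alpha)-\sum_i Q_{ii}\alpha_i\bigr]$, and bounding the two pieces by $\underline f$ and $\overline f$ respectively yields $f_{\min}^{(r-2)}\ge (r\underline f-\overline f)/(r-1)$, hence (\ref{re8}); in fact (\ref{re8}) also follows immediately from the paper's stronger inequality (\ref{eqn01}) together with $\underline f\le f_{\Delta(n,r)}$. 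For (\ref{re21}) you use a multinomial sampling argument; I checked the moment computation and it is right: with $\mathbb{E}[\alpha_i^2]=rx_i^*+r(r-1)(x_i^*)^2$ and $\mathbb{E}[\alpha_i\alpha_j]=r(r-1)x_i^*x_j^*$ one gets
\begin{equation*}
\mathbb{E}\Bigl[f\bigl(\tfrac{\alpha}{r}\bigr)\Bigr]=\frac{r-1}{r}f(x^*)+\frac{1}{r}\sum_{i=1}^n Q_{ii}x_i^*\le \frac{r-1}{r}\underline f+\frac{1}{r}\overline f,
\end{equation*}
and $f_{\Delta(n,r)}\le \mathbb{E}[f(\alpha/r)]$ finishes the argument. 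This probabilistic route is the one taken in \cite{KLS13} (the ``alternative proof of the PTAS'' cited in Section~\ref{secintro}) rather than the Bernstein-approximation route of \cite{KLP06}; it buys a clean two-line identity in the quadratic case at the cost of introducing randomness, whereas the deterministic identity $f(\alpha)-\sum_iQ_{ii}\alpha_i$ you used for (\ref{re8}) would equally well give (\ref{re21}) after a short rearrangement. Both halves of your argument are sound.
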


\noindent By adding up (\ref{re8}) and (\ref{re21}), one gets
$$f_{\Delta(n,r)}-f_{\min}^{(r-2)}\le \left({1\over r-1}+{1\over r}\right)(\overline{f}-\underline{f}),$$
\noindent which is
 implied by
our result (\ref{eqn01}).

\noindent Moreover, in \cite{YEA12}, Yildirim considers one hierarchical upper bound of $\underline{f}$ (when $f$ is quadratic), which is defined by
$\min_{k\le r}f_{\Delta(n,k)}.$
One can easily verify that
\begin{eqnarray*}
f_{\min}^{(r-2)}\le \underline{f} \le \min_{k\le r}f_{\Delta(n,k)} \le f_{\Delta(n,r)}.
\end{eqnarray*}
In \cite[Theorem 4.1]{YEA12}, Yildirim shows $\min_{k\le r} f_{\Delta(n,k)}-f_{\min}^{(r-2)}\le {1\over r-1}(Q_{\max}-\underline{f})$, which can also be easily implied by our result (\ref{eqn01}).

\noindent The following example shows  that the upper bound (\ref{eqn01}) can be tight.
\begin{example}\cite[Example 2]{KLS13}\label{egdeg2}
Consider the quadratic polynomial $f=\sum_{i=1}^n x_i^2$.
As $f$ is convex, one can check that $\underline{f}={1\over n}$ (attained at $x={1\over n}e$) and $\overline{f}=1$ (attained at any standard unit vector). To compute $f_{\Delta(n,r)}$,
we write $r$ as $r=kn+s$, where $k\ge 0$ and $0\le s<n$. Then one can check that
$$f_{\Delta(n,r)}={1\over n} + {1\over r^2}{s(n-s)\over n}.$$
\noindent By (\ref{fminrd}), we have
$$f_{\Delta(n,r)}-f_{\min}^{(r-2)}={1\over r-1}\left(\overline{f}-\underline{f}\right)-{1\over r^2(r-1)}{s(n-s)\over n}.$$
\noindent Hence, for this example, the upper bound (\ref{eqn01}) is tight when $s=0$.
\end{example}

\section{The cubic case}\label{secdeg3}

\noindent For any cubic polynomial $f$, we consider the difference $f_{\Delta(n,r)}-f_{\min}^{(r-3)}$ and show  the following 
result.

\begin{theorem}\label{thmcub}
For any cubic polynomial $f$ and $r\ge 3$, one has
\begin{eqnarray}\label{iqcub}
f_{\Delta(n,r)}-f_{\min}^{(r-3)}\le {4r\over (r-1)(r-2)}(\overline{f}-\underline{f})\label{eqn2}.
\end{eqnarray}
\end{theorem}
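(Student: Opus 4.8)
The plan is to imitate the clean quadratic argument: apply the explicit formula (\ref{fminrd}) to $f_{\min}^{(r-3)}$, peel off $f$ evaluated on the grid, and bound the remaining ``correction'' in terms of $\overline{f}-\underline{f}$. Writing $f=\sum_{\beta\in I(n,3)}f_\beta x^\beta$, each $\beta\in I(n,3)$ has one of the three shapes $3e_i$, $2e_i+e_j$ $(i\neq j)$, or $e_i+e_j+e_k$ (distinct), and from $\alpha_i^{\underline 3}=\alpha_i^3-3\alpha_i^2+2\alpha_i$, $\alpha_i^{\underline 2}\alpha_j=\alpha_i^2\alpha_j-\alpha_i\alpha_j$, while $\alpha_i\alpha_j\alpha_k$ is unchanged, one gets $f(\alpha)-\sum_\beta f_\beta\alpha^{\underline\beta}=\sum_i f_{3e_i}(3\alpha_i^2-2\alpha_i)+\sum_{i\neq j}f_{2e_i+e_j}\alpha_i\alpha_j$. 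I would package the correction into the forms $\phi(x):=\sum_i 3f_{3e_i}x_i^2+\sum_{i\neq j}f_{2e_i+e_j}x_ix_j$ (degree $2$) and $\psi(x):=\sum_i f_{3e_i}x_i=\sum_i f(e_i)x_i$ (degree $1$). Substituting $\alpha=ry$ with $y\in\Delta(n,r)$ and using $f(ry)=r^3 f(y)$, the formula (\ref{fminrd}) becomes
\[
f_{\min}^{(r-3)}=\min_{y\in\Delta(n,r)}\frac{r^2 f(y)-r\phi(y)+2\psi(y)}{(r-1)(r-2)}.
\]

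Since $f_{\Delta(n,r)}=\min_{y\in\Delta(n,r)}f(y)$ is a minimum over the same grid, the elementary inequality $\min A-\min B\le\max(A-B)$ together with $(r-1)(r-2)-r^2=-(3r-2)$ and a regrouping gives
\[
f_{\Delta(n,r)}-f_{\min}^{(r-3)}\le\frac{1}{(r-1)(r-2)}\max_{y\in\Delta(n,r)}\bigl[r(\phi(y)-3f(y))+2(f(y)-\psi(y))\bigr].
\]
The linear term is immediate: as $\psi(y)$ is a convex combination of the vertex values $f(e_i)\in[\underline{f},\overline{f}]$ and $f(y)\in[\underline{f},\overline{f}]$, one has $f(y)-\psi(y)\le\overline{f}-\underline{f}$ on $\Delta_n$. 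Hence the theorem reduces to the polynomial inequality $r(\phi-3f)+2(f-\psi)\le 4r(\overline{f}-\underline{f})$ on $\Delta_n$, and since the second summand contributes at most $2(\overline{f}-\underline{f})$, it suffices to bound $\max_{\Delta_n}(\phi-3f)$ by $\tfrac{4r-2}{r}(\overline{f}-\underline{f})$; in particular any uniform bound with constant at most $\tfrac{10}{3}$ already settles every $r\ge 3$.

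The main obstacle is exactly this bound on $\phi-3f$. First I would note that the whole estimate is invariant under $f\mapsto f-c(\sum_i x_i)^3$ (one checks $\phi\mapsto\phi-3c$, $\psi\mapsto\psi-c$, while $f,\underline{f},\overline{f}$ shift by $-c$, so $\phi-3f$, $f-\psi$ and $\overline{f}-\underline{f}$ are unchanged), which lets me center $f$ so that $|f|\le\tfrac12(\overline{f}-\underline{f})$ on $\Delta_n$. The delicate point is that $\phi$ cannot be treated on its own: with the mixed Bernstein coefficients $B_{2e_i+e_j}:=\tfrac13 f_{2e_i+e_j}$, the quantity $\tfrac13\phi$ is a convex combination of the $f(e_i)$ and the $B_{2e_i+e_j}$, and the latter may exceed $\overline{f}$ by a fixed multiple of $\overline{f}-\underline{f}$, as one sees by writing $B_{2e_i+e_j}$ as an affine combination of values of $f$ on the edge $[e_i,e_j]$ whose coefficients do not all share the same sign. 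The cancellation against $3f$ must therefore be exploited: the coefficient of every $x_i^3$ in $\phi-3f$ vanishes, and on each edge $[e_i,e_j]$ the restriction of $\tfrac13\phi$ is the quadratic that agrees with the univariate cubic $\lambda\mapsto f(\lambda e_i+(1-\lambda)e_j)$ at the two endpoints and carries the average of its two inner Bernstein coefficients, so $\phi-3f$ is a sum of ``balanced'' (zero-sum) coefficient differences that are genuinely of size $O(\overline{f}-\underline{f})$. Turning this into the sharp numerical constant—by evaluating $f$ at suitably chosen grid points on the edges and two-dimensional faces and optimizing the multipliers so that the positive part of the resulting affine combination is as small as possible—is the step demanding the most care; the value $4$ is not expected to be tight, leaving slack over the true extremal ratio $\max_{\Delta_n}(\phi-3f)/(\overline{f}-\underline{f})$.
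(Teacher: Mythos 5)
Your setup is correct and runs parallel to the paper's opening moves: the identity $f(\alpha)-\sum_\beta f_\beta\alpha^{\underline{\beta}}=\sum_i f_{3e_i}(3\alpha_i^2-2\alpha_i)+\sum_{i\neq j}f_{2e_i+e_j}\alpha_i\alpha_j$, the resulting formula for $f_{\min}^{(r-3)}$, the $\min A-\min B\le\max(A-B)$ step, and the bound $f-\psi\le\overline{f}-\underline{f}$ are all fine. But the proof is not complete: everything hinges on the inequality $\max_{\Delta_n}(\phi-3f)\le\frac{4r-2}{r}(\overline{f}-\underline{f})$ (uniformly, $\frac{10}{3}(\overline{f}-\underline{f})$), and you explicitly leave this as ``the step demanding the most care'' with only a sketched strategy. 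That is exactly the substantive content of the theorem; without it nothing is proved. For comparison, the paper's decisive input at this point is the evaluation of $f$ at the edge midpoints $(e_i+e_j)/2$, which gives $f_i+f_j+f_{ij}+g_{ij}\le 8\overline{f}$ and hence, after summing against $x_ix_j$ and using $\sum_i x_i=1$, the clean inequality $\phi(x)+\psi(x)\le 4\overline{f}$ on $\Delta_n$; combined with $\psi\ge\underline{f}$ and $f_{\Delta(n,r)}\ge\underline{f}$ this yields the constant $4r$ exactly.

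There is also a structural problem with your reduction that makes the missing lemma \emph{harder} than what the paper's method can deliver. By peeling off $2(f-\psi)\le 2(\overline{f}-\underline{f})$ separately, you must prove $\phi-3f\le\frac{10}{3}(\overline{f}-\underline{f})$ at $r=3$. The paper's argument only establishes (in effect) $\phi-3f\le 4\overline{f}-\psi-3f\le 4(\overline{f}-\underline{f})$; the constant $4$ fed into your decomposition gives $\frac{4r+2}{(r-1)(r-2)}$, strictly weaker than the claimed $\frac{4r}{(r-1)(r-2)}$. To recover the theorem you must keep $-2\psi$ coupled with $r\phi$ (so that $r\phi-2\psi\le 4r\overline{f}-(r+2)\psi\le 4r\overline{f}-(r+2)\underline{f}$), rather than bounding $f-\psi$ on its own. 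Moreover the constant $\frac{10}{3}$ you need is not comfortably true: already $f=-x_1x_2x_3$ gives $\phi-3f=3x_1x_2x_3$ with $\max_{\Delta_3}(\phi-3f)=\frac{1}{9}=3(\overline{f}-\underline{f})$, so there is very little slack, and no argument in your proposal (nor in the paper) establishes any bound below $4$. You should restructure along the paper's grouping and supply the midpoint-evaluation inequality; as written, the proposal has a genuine gap at its central step.
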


\begin{proof}
We can write any cubic polynomial $f$ as
$$f=\sum_{i=1}^nf_ix_i^3+\sum_{i<j}(f_{ij}x_ix_j^2+g_{ij}x_i^2x_j)+\sum_{i<j<k}f_{ijk}x_ix_jx_k.$$

\noindent Then, by (\ref{fminrd}) one can check that
\begin{eqnarray}
&&{(r-1)(r-2)\over r^2}f_{\min}^{(r-3)}\nonumber\\
&=&\min_{\alpha\in I(n,r)}\left\{f({\alpha\over r})-{1\over r^3}\left( 3\sum_{i=1}^n f_i\alpha_i^2-2\sum_{i=1}^n f_i\alpha_i+\sum_{i<j}(f_{ij}+g_{ij})\alpha_i\alpha_j \right)\right\}\nonumber\\
&\ge& f_{\Delta(n,r)}-{1\over r}\max_{\alpha\in I(n,r)}\left\{3\sum_{i=1}^n f_i\left({\alpha_i\over r}\right)^2+\sum_{i<j}(f_{ij}+g_{ij})\left({\alpha_i\over r}\right)\left({\alpha_j\over r}\right)\right\}+{1\over r^2}\min_{\alpha\in I(n,r)}2\sum_{i=1}^n f_i{\alpha_i\over r}\nonumber\\
&\ge& f_{\Delta(n,r)}-{1\over r}\max_{x\in\Delta_n}\left\{3\sum_{i=1}^n f_ix_i^2+\sum_{i<j}(f_{ij}+g_{ij})x_ix_j\right\}+{1\over r^2}\min_{x\in\Delta_n}2\sum_{i=1}^n f_ix_i.\label{eqfinaladd1}
\end{eqnarray}
Evaluating $f$ at $e_i$ and $(e_i+e_j)/2$ yields, respectively, the relations:
\begin{eqnarray}
\underline{f}\le f_i\le\overline{f},\label{re1}\\
f_i+f_j+f_{ij}+g_{ij}\le 8\overline{f}.\label{re2}
\end{eqnarray}
Using (\ref{re2}) and the fact that $\sum_{i=1}^nx_i=1$, one can obtain
\begin{equation}\label{re3}
\sum_{i<j}(f_{ij}+g_{ij})x_ix_j\le\sum_{i<j}(8\overline{f}-f_i-f_j)x_ix_j=8\overline{f}\sum_{i<j}x_ix_j-\sum_{i=1}^n
f_ix_i(1-x_i).
\end{equation}
By (\ref{eqfinaladd1}), (\ref{re1}), (\ref{re3}) and the fact that $\sum_{i=1}^nx_i=1$, one can get
\begin{eqnarray*}
(r-1)(r-2)f_{\min}^{(r-3)}\ge r^2f_{\Delta(n,r)}-4r\overline{f}+(r+2)\min_{x\in\Delta_n}\sum_{i=1}^nf_ix_i\ge r^2f_{\Delta(n,r)}-4r\overline{f}+(r+2)\underline{f}.
\end{eqnarray*}
Hence, one has
\begin{eqnarray*}
(r-1)(r-2)\left(f_{\Delta(n,r)}-f_{\min}^{(r-3)}\right)\le 4r\overline{f}-(3r-2)f_{\Delta(n,r)}-(r+2)\underline{f}\le 4r(\overline{f}-\underline{f}).
\end{eqnarray*}
\qed
\end{proof}

\noindent Now we observe  that our result (\ref{iqcub}) refines the relevant upper bound obtained from \cite{KLP06,KLS13}. De Klerk et al. \cite{KLP06} show  the following result.

\begin{theorem}\label{klpthmdeg3}\cite[Theorem 3.3]{KLP06}
Suppose $f\in \mathcal{H}_{n,3}$ and $r\ge 3$. Then
\begin{equation}\label{re9}
\underline{f}-f_{\min}^{(r-3)}\le {4r\over (r-1)(r-2)}(\overline{f}-\underline{f}),
\end{equation}
\begin{equation}\label{re10}
f_{\Delta(n,r)}-\underline{f}\le {4\over r}(\overline{f}-\underline{f}).
\end{equation}
\end{theorem}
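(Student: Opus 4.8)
The plan is to derive both inequalities from the explicit formula of Lemma~\ref{lemrf} together with only two families of evaluations of $f$, exactly in the spirit of the proof of Theorem~\ref{thmcub}. Writing $f=\sum_i f_ix_i^3+\sum_{i<j}(f_{ij}x_ix_j^2+g_{ij}x_i^2x_j)+\sum_{i<j<k}f_{ijk}x_ix_jx_k$, evaluation at the vertices $e_i$ gives $\underline f\le f_i\le\overline f$ (relation (\ref{re1})) and evaluation at the edge midpoints $(e_i+e_j)/2$ gives $f_i+f_j+f_{ij}+g_{ij}\le 8\overline f$ (relation (\ref{re2})), whence the quadratic-form bound (\ref{re3}). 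These three facts, combined with $\sum_i x_i=1$, are the only analytic input; everything else is bookkeeping arranged so that the coefficients collapse to the claimed constants.

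For the lower bound (\ref{re9}) I would reuse the normalized expression already established inside the proof of Theorem~\ref{thmcub}. Let $\alpha^\ast\in I(n,r)$ attain the minimum in (\ref{fminrd}) and set $y=\alpha^\ast/r\in\Delta_n$. Evaluated at the minimizer, that expression reads
\[
\frac{(r-1)(r-2)}{r^2}\,f_{\min}^{(r-3)}
= f(y)-\frac{3}{r}\sum_i f_iy_i^2+\frac{2}{r^2}\sum_i f_iy_i-\frac{1}{r}\sum_{i<j}(f_{ij}+g_{ij})y_iy_j .
\]
Since $y\in\Delta_n$ we have $f(y)\ge\underline f$; bounding $\sum_{i<j}(f_{ij}+g_{ij})y_iy_j$ from above via (\ref{re3}), then using $\sum_i f_iy_i^2\le\overline f\sum_i y_i^2$ and $\sum_i f_iy_i\ge\underline f$, the $\overline f$-contributions of the two quadratic forms recombine (using $\sum_i y_i^2+2\sum_{i<j}y_iy_j=1$) into a single term $-\tfrac4r\overline f$, and the right-hand side collapses to $(1+\tfrac1r+\tfrac2{r^2})\underline f-\tfrac4r\overline f$. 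Rearranging and multiplying through by $r^2/((r-1)(r-2))$ yields exactly (\ref{re9}).

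For the upper bound (\ref{re10}) I would pass to a probabilistic argument. Let $x^\ast$ minimize $f$ over $\Delta_n$ and let $\alpha$ be multinomial with $r$ trials and cell probabilities $x^\ast$, so that $\alpha/r\in\Delta(n,r)$ and hence $f_{\Delta(n,r)}\le E[f(\alpha/r)]$. Using the falling-factorial moment identity $E[\alpha^{\underline\beta}]=r^{\underline 3}(x^\ast)^\beta$ for $|\beta|=3$ (after expanding the ordinary powers $\alpha_i^3$ and $\alpha_i\alpha_j^2$ into falling factorials), one finds
\[
E[f(\alpha/r)]=\frac{(r-1)(r-2)}{r^2}\,\underline f+\frac{3(r-1)}{r^2}\sum_i f_i(x_i^\ast)^2+\frac{1}{r^2}\sum_i f_ix_i^\ast+\frac{r-1}{r^2}\sum_{i<j}(f_{ij}+g_{ij})x_i^\ast x_j^\ast .
\]
Subtracting $\underline f$ and applying the very same inequalities (\ref{re1})--(\ref{re3}), the coefficients again cancel and give $E[f(\alpha/r)]-\underline f\le\frac{4(r-1)}{r^2}(\overline f-\underline f)\le\frac4r(\overline f-\underline f)$, which is (\ref{re10}).

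The routine part is the moment computation $E[\alpha^{\underline\beta}]=r^{\underline 3}(x^\ast)^\beta$ together with the conversion between ordinary powers and falling factorials; I expect the genuine obstacle to be the sign- and coefficient-tracking in the two collapses. Relation (\ref{re3}) must be applied in the correct direction, and the sums $\sum_i f_iy_i^2$ (resp.\ $\sum_i f_i(x_i^\ast)^2$) and $\sum_i f_iy_i$ must be estimated by $\overline f$ and $\underline f$ with precisely the right signs, so that the dependence on $\sum_i y_i^2$ cancels and the constants reduce to exactly $\frac4r$ and $\frac{4r}{(r-1)(r-2)}$; any looser estimate of these quadratic and linear forms destroys the cancellation and yields a weaker bound.
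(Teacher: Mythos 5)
Your proof is correct, but note that the paper does not prove this statement at all: Theorem~\ref{klpthmdeg3} is quoted from \cite[Theorem 3.3]{KLP06} purely as a benchmark against which Theorem~\ref{thmcub} is compared, so there is no in-paper proof to match. What you have done is reassemble a self-contained proof from two different sources of technique. For (\ref{re9}) you reuse verbatim the normalized identity and the evaluation inequalities (\ref{re1})--(\ref{re3}) from the proof of Theorem~\ref{thmcub}, only dropping the comparison with $f_{\Delta(n,r)}$ and replacing it by $f(y)\ge\underline f$; I checked the bookkeeping and the constants do collapse exactly to $\tfrac{4r}{(r-1)(r-2)}$, since $1-\tfrac{r^2+r+2}{(r-1)(r-2)}=-\tfrac{4r}{(r-1)(r-2)}$. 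For (\ref{re10}) you use the multinomial-sampling argument with the falling-factorial moment identity $E[\alpha^{\underline\beta}]=r^{\underline{3}}(x^\ast)^\beta$; this is precisely the method of \cite{KLS13}, and indeed your intermediate bound $\tfrac{4(r-1)}{r^2}(\overline f-\underline f)$ is exactly the refined estimate (\ref{re11}), of which (\ref{re10}) is the weakening $\tfrac{4(r-1)}{r^2}\le\tfrac4r$. The original argument in \cite{KLP06} for (\ref{re10}) is instead a deterministic one via Bernstein-type approximation; your probabilistic route is arguably cleaner and buys the sharper constant for free. One small presentational point: you should state explicitly that $\alpha/r\in\Delta(n,r)$ holds surely for the multinomial sample (so $f_{\Delta(n,r)}\le E[f(\alpha/r)]$), and that the sign conditions $r-2>0$ and $y_i^2\ge0$ are what legitimize the directions in which (\ref{re1}) and (\ref{re3}) are applied; otherwise the cancellation you rely on is not justified.
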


\noindent Recently, De Klerk et al. \cite[Corollary 2 ]{KLS13} refine (\ref{re10}) to
\begin{equation}\label{re11}
f_{\Delta(n,r)}-\underline{f}\le \left({4\over r}-{4\over r^2}\right)(\overline{f}-\underline{f}).
\end{equation}

\noindent Similar to the quadratic case (in Section \ref{secdeg2}), 
our new upper bound (\ref{iqcub}) implies the upper bound obtained by adding up (\ref{re9}) and (\ref{re11}). However, we do not find any example showing the upper bound (\ref{eqn2}) is tight. Thus, it is still an open question to show the tightness of the upper bound (\ref{eqn2}).

\ignore{
\noindent The following example illustrates that our new upper bound (\ref{iqcub}) can be not tight.

\begin{example}\cite[Example 3]{KLS13}\label{egdeg3}
Consider the cubic polynomial $f=x_1^3+x_2^3$. One can check that
$\overline{f}=1,\underline{f}={1\over 4}$ and
\begin{displaymath}
f_{\Delta(2,r)}=\left\{ \begin{array}{ll}
{1\over 4} & \text{if $r$ is even,}\\
{1\over 4}+{3\over 4r^2} & \text{if $r$ is odd.}
\end{array} \right.
\end{displaymath}
Moreover, for $r\ge 3$, by (\ref{fminrd}) one can check
\begin{displaymath}
f_{\Delta(2,r)}-f_{\min}^{(r-3)}= \left\{ \begin{array}{ll}
{1\over r-1}\left(\overline{f}-\underline{f}\right) & \text{if $r$ is even,}\\
{r^2+r-1\over r^2(r-1)}\left(\overline{f}-\underline{f}\right) & \text{if $r$ is odd.}
\end{array} \right.
\end{displaymath}
\noindent Hence, one can easily see (\ref{iqcub}) is not tight for this example.
\end{example}
}

\section{The square-free case}\label{secsqfree}

\noindent Consider the square-free (aka multilinear) polynomial $f=\sum_{I:I\subseteq [n],|I|=d}f_Ix^I\in\mathcal{H}_{n,d}$. We have the following 
result for the difference $f_{\Delta(n,r)}-f_{\min}^{(r-d)}$.
\begin{theorem}\label{thmsf}
For any square-free polynomial $f=\sum_{I:I\subseteq [n],|I|=d}f_Ix^I$ and $r\ge d$, one has
\begin{eqnarray}\label{fminsf}
f_{\Delta(n,r)}-f_{\min}^{(r-d)}\le \left({r^d\over r^{\underline{d}}}-1\right)\left(\overline{f}-\underline{f}\right).
\end{eqnarray}
\end{theorem}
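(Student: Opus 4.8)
The plan is to observe that, for a square-free $f$, the lower bound $f_{\min}^{(r-d)}$ is an \emph{exact} positive multiple of the upper bound $f_{\Delta(n,r)}$, and then to exploit the fact that both quantities are nonpositive.

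First I would specialize the formula (\ref{fminrd}) to the square-free case. The nonzero coefficients $f_\beta$ of $f=\sum_{I}f_Ix^I$ occur exactly at the $0/1$ incidence vectors $\beta$ of the sets $I$ (so $\beta_i=1$ for $i\in I$ and $\beta_i=0$ otherwise), with $f_\beta=f_I$. For such $\beta$ the falling-factorial product collapses, since $\alpha_i^{\underline{0}}=1$ and $\alpha_i^{\underline{1}}=\alpha_i$, giving $\alpha^{\underline{\beta}}=\prod_{i\in I}\alpha_i$. Substituting into (\ref{fminrd}) and using $\prod_{i\in I}\alpha_i=r^d\prod_{i\in I}(\alpha_i/r)$ together with $|I|=d$, I would recognize $\sum_I f_I\prod_{i\in I}\alpha_i=r^d f(\alpha/r)$. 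As $r^d/r^{\underline d}>0$ does not depend on $\alpha$, it factors out of the minimum, yielding the identity
\[
f_{\min}^{(r-d)}=\frac{r^d}{r^{\underline d}}\min_{\alpha\in I(n,r)}f(\alpha/r)=\frac{r^d}{r^{\underline d}}\,f_{\Delta(n,r)}.
\]

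The second step is a sign observation. Every monomial $x^I$ with $|I|=d\ge 2$ vanishes at each vertex $e_j$ of the simplex, so $f(e_j)=0$; hence $\overline f\ge 0$, and since every vertex lies on the grid $\Delta(n,r)$ we also get $f_{\Delta(n,r)}\le 0$. Combining this with $f_{\Delta(n,r)}\ge\underline f$ (from (\ref{reprop1})) gives the chain $0\le -f_{\Delta(n,r)}\le -\underline f\le \overline f-\underline f$.

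Finally I would combine the two steps. Rearranging the identity gives $f_{\Delta(n,r)}-f_{\min}^{(r-d)}=\big(\tfrac{r^d}{r^{\underline d}}-1\big)\big(-f_{\Delta(n,r)}\big)$, and since $r^{\underline d}\le r^d$ the factor $\tfrac{r^d}{r^{\underline d}}-1$ is nonnegative. Multiplying the bound $-f_{\Delta(n,r)}\le\overline f-\underline f$ by this nonnegative factor then yields (\ref{fminsf}). The only real content is the first step, the collapse of (\ref{fminrd}) into an exact scalar multiple of $f_{\Delta(n,r)}$, which is special to the square-free case; after that the argument is just bookkeeping with signs. The one point requiring mild care is the vanishing at vertices, which needs $d\ge 2$; for $d=1$ the prefactor $\tfrac{r^d}{r^{\underline d}}-1$ is zero and the inequality reduces to the trivial identity $f_{\min}^{(r-1)}=f_{\Delta(n,r)}$.
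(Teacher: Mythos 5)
Your proposal is correct and follows essentially the same route as the paper: specialize (\ref{fminrd}) to get the exact identity $f_{\min}^{(r-d)}=\tfrac{r^d}{r^{\underline d}}f_{\Delta(n,r)}$, then use the vanishing of $f$ at the vertices (so $\overline f\ge 0$) together with $f_{\Delta(n,r)}\ge\underline f$ to bound $-f_{\Delta(n,r)}$ by $\overline f-\underline f$. The paper phrases the last step as multiplying the inequality $f_{\Delta(n,r)}\ge\underline f$ by the nonpositive factor $1-\tfrac{r^d}{r^{\underline d}}$, but this is the same chain of estimates, and your separate treatment of $d=1$ matches the paper's as well.
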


\begin{proof}
From (\ref{fminrd}), one can easily check that
\begin{eqnarray*}
f_{\min}^{(r-d)}=\min_{\alpha\in I(n,r)}\sum_{I:I\subseteq [n],|I|=d}f_I{\alpha^I\over r^{\underline{d}}}={1\over r^{\underline{d}}}\min_{\alpha\in I(n,r)}f(\alpha).
\end{eqnarray*}
As a result, one can obtain
\begin{eqnarray*}
{f_{\min}^{(r-d)}\over f_{\Delta(n,r)}}={r^d\over r^{\underline{d}}}.
\end{eqnarray*}
For $d=1$, the result (\ref{fminsf}) is clear.

\noindent Now we assume $d\ge 2$.
Considering $\overline{f}\ge 0$ (as $f(e_i)=0$ for any $i\in[n]$), we obtain
\begin{eqnarray}\label{re18}
f_{\Delta(n,r)}-f_{\min}^{(r-d)}=\left(1-{r^d\over r^{\underline{d}}}\right)f_{\Delta(n,r)}\le \left(1-{r^d\over r^{\underline{d}}}\right)\underline{f} \le \left({r^d\over r^{\underline{d}}}-1\right)\left(\overline{f}-\underline{f}\right).
\end{eqnarray}
\qed
\end{proof}

\noindent The following example shows that our upper bound (\ref{fminsf}) can be tight.
\begin{example}\cite[Example 4]{KLS13}\label{egsf}
Consider the square-free polynomial $f=-x_1x_2$. One can check
$\overline{f}=0,$ $\underline{f}=-{1\over 4}$, and
\begin{displaymath}
f_{\Delta(2,r)}=\left\{ \begin{array}{ll}
-{1\over 4} & \text{if $r$ is even,}\\
-{1\over 4}+ {1\over 4r^2} & \text{if $r$ is odd.}
\end{array} \right.
\end{displaymath}
By (\ref{fminrd}), we have
\begin{displaymath}
f_{\Delta(2,r)}-f_{\min}^{(r-2)}=\left\{ \begin{array}{ll}
{1\over r-1}\left(\overline{f}-\underline{f}\right) & \text{if $r$ is even,}\\
\left({1\over r}+{1\over r^2}\right)\left(\overline{f}-\underline{f}\right) & \text{if $r$ is odd.}
\end{array} \right.
\end{displaymath}
\noindent For this example, the upper bound (\ref{fminsf}) is tight when $r$ is even.
In fact, from (\ref{re18}), one can easily see that the upper bound (\ref{fminsf}) is tight as long as $f_{\Delta(n,r)}=\underline{f}-\overline{f}$ holds.
\end{example}

\section{The general case}\label{secgene}

\noindent Now, we consider an arbitrary polynomial $f=\sum_{\beta\in I(n,d)}f_{\beta}x^{\beta}\in \mathcal{H}_{n,d}$. We need the following notation to formulate our result.
Consider the  univariate polynomial  $t^d-t^{\underline{d}}$ (in the variable $t$), which can be  written  as
\begin{equation}\label{tpd}
t^d-t^{\underline{d}}=\sum_{k=1}^{d-1}(-1)^{d-k-1}a_{d-k}t^{k},
\end{equation}

\noindent for some positive scalars $a_1,a_2,\dots,a_{d-1}$. Moreover, one can easily check that
\begin{eqnarray}\label{re19}
\sum_{k=1}^{d-1}a_{d-k}t^{k}=(t+d-1)^{\underline{d}}-t^d.
\end{eqnarray}
We can show the following
error bound for the range $f_{\Delta(n,r)}-f_{\min}^{(r-d)}$. 

\begin{theorem}\label{thmgeniq}
For any polynomial $f\in\mathcal{H}_{n,d}$ and $r\ge d$, one has
\begin{eqnarray}\label{re24}
f_{\Delta(n,r)}-f_{\min}^{(r-d)}\le {(r+d-1)^{\underline{d}}-r^d \over r^{\underline{d}}}{2d-1\choose d}d^d (\overline{f}-\underline{f}).
\end{eqnarray}
\end{theorem}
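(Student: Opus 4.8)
The plan is to bound the difference of the two minima by a pointwise difference and then transfer the estimate to the range of the Bernstein coefficients, to which Theorem \ref{thmgene} applies. Write $g(\alpha):=\sum_{\beta\in I(n,d)}f_{\beta}\,\alpha^{\underline{\beta}}/r^{\underline{d}}$ for the function minimized in (\ref{fminrd}); Lemma \ref{lemrf} gives $f_{\min}^{(r-d)}=\min_{\alpha\in I(n,r)}g(\alpha)$, while by definition $f_{\Delta(n,r)}=\min_{\alpha\in I(n,r)}f(\alpha/r)$. Since the difference of the minima of two functions is at most the maximum of their pointwise difference, I would first record
\[
f_{\Delta(n,r)}-f_{\min}^{(r-d)}\le \max_{\alpha\in I(n,r)}\big(f(\alpha/r)-g(\alpha)\big),
\]
so that it suffices to estimate $f(\alpha/r)-g(\alpha)$ for each fixed $\alpha\in I(n,r)$.

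The conceptual key is that both $f(\alpha/r)$ and $g(\alpha)$ are convex combinations of the Bernstein coefficients $f_{\beta}\beta!/d!$. Setting $w_{\beta}:=\frac{d!}{\beta!}\frac{\alpha^{\beta}}{r^{d}}$ and $v_{\beta}:=\frac{d!}{\beta!}\frac{\alpha^{\underline{\beta}}}{r^{\underline{d}}}$, the ordinary multinomial theorem gives $\sum_{\beta}w_{\beta}=(\sum_i\alpha_i)^{d}/r^{d}=1$, and the falling-factorial version of the multinomial theorem, $\sum_{\beta\in I(n,d)}\frac{d!}{\beta!}\alpha^{\underline{\beta}}=(\sum_i\alpha_i)^{\underline{d}}=r^{\underline{d}}$, gives $\sum_{\beta}v_{\beta}=1$; both families are nonnegative. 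Hence $f(\alpha/r)-g(\alpha)=\sum_{\beta}(f_{\beta}\beta!/d!)(w_{\beta}-v_{\beta})$, and because $\sum_{\beta}(w_{\beta}-v_{\beta})=0$ I may subtract the constant $\min_{\beta}f_{\beta}\beta!/d!$ from every coefficient and discard the (nonpositive) terms with $w_\beta<v_\beta$, obtaining
\[
f(\alpha/r)-g(\alpha)\le \Big(\max_{\beta}f_{\beta}\tfrac{\beta!}{d!}-\min_{\beta}f_{\beta}\tfrac{\beta!}{d!}\Big)\sum_{\beta\in I(n,d)}(w_{\beta}-v_{\beta})^{+}.
\]

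It then remains to bound $\sum_{\beta}(w_{\beta}-v_{\beta})^{+}$ by $\big((r+d-1)^{\underline{d}}-r^{d}\big)/r^{\underline{d}}$, after which Theorem \ref{thmgene} closes the argument. For this I would use the termwise estimate $w_{\beta}-v_{\beta}\le \frac{d!}{\beta!}\frac{\alpha^{\beta}-\alpha^{\underline{\beta}}}{r^{\underline{d}}}$, which holds since $1/r^{d}\le 1/r^{\underline{d}}$ and $\alpha^{\beta}\ge\alpha^{\underline{\beta}}\ge 0$; its right-hand side is nonnegative, so summing over all $\beta$ and invoking the two multinomial identities above gives $\sum_{\beta}(w_{\beta}-v_{\beta})^{+}\le (r^{d}-r^{\underline{d}})/r^{\underline{d}}$. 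Finally I would enlarge the numerator from $r^{d}-r^{\underline{d}}$ to $(r+d-1)^{\underline{d}}-r^{d}$: by (\ref{tpd}) one has $r^{d}-r^{\underline{d}}=\sum_{k=1}^{d-1}(-1)^{d-k-1}a_{d-k}r^{k}$, and since each $a_{d-k}>0$ and $r>0$, replacing the alternating signs by $+$ only increases the value, so by (\ref{re19}) $r^{d}-r^{\underline{d}}\le\sum_{k=1}^{d-1}a_{d-k}r^{k}=(r+d-1)^{\underline{d}}-r^{d}$.

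The convex-combination identities and the constant-subtraction trick are routine; the genuine content is the falling-factorial multinomial identity that makes $g(\alpha)$ a convex combination of the very same Bernstein coefficients as $f(\alpha/r)$. I expect the main obstacle — and the place where the exact form of the bound is decided — to be the last step: the clean termwise estimate in fact yields the slightly stronger factor $(r^{d}-r^{\underline{d}})/r^{\underline{d}}$, and it is precisely the passage through the signed expansion of $t^{d}-t^{\underline{d}}$ in (\ref{tpd})--(\ref{re19}) that produces the stated factor $\big((r+d-1)^{\underline{d}}-r^{d}\big)/r^{\underline{d}}$, which is exactly why those coefficients $a_{d-k}$ were introduced. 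I would take care to verify that the falling-factorial identity holds as stated and that the positive-part bound is uniform in $\alpha$, so that it survives the maximization over $I(n,r)$.
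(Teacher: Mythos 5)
Your proof is correct, and it takes a genuinely different route from the paper's. The paper also starts from Lemma \ref{lemrf} and reduces matters to controlling $\sum_{\beta}f_{\beta}(\alpha^{\beta}-\alpha^{\underline{\beta}})$, but it then expands each $x^{\beta}-x^{\underline{\beta}}$ into monomials with alternating signs, proves the aggregation identity (\ref{re6}) linking those coefficients to the univariate $a_{d-k}$, and splits the degrees $k\in[d-1]$ by the parity of $d-k$, bounding one class by the maximal and the other by the minimal Bernstein coefficient; recombining the two classes is exactly what produces the factor $\sum_{k}a_{d-k}r^{k}=(r+d-1)^{\underline{d}}-r^{d}$. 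You avoid all of this sign bookkeeping by viewing $f(\alpha/r)$ and $g(\alpha)$ as convex combinations of the \emph{same} Bernstein coefficients (via the multinomial and Vandermonde--Chu identities) and bounding the positive part of the difference of the two weight vectors termwise. The payoff is that your argument in fact establishes the stronger bound with factor $(r^{d}-r^{\underline{d}})/r^{\underline{d}}$ in place of $\bigl((r+d-1)^{\underline{d}}-r^{d}\bigr)/r^{\underline{d}}$ --- the same constant as in (\ref{re22}), but now for the larger range $f_{\Delta(n,r)}-f_{\min}^{(r-d)}$ --- and for $d\ge 3$ this is strictly smaller, since $r^{d}-r^{\underline{d}}=\sum_{k}(-1)^{d-k-1}a_{d-k}r^{k}$ while $(r+d-1)^{\underline{d}}-r^{d}=\sum_{k}a_{d-k}r^{k}$. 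Your closing remark that the passage through (\ref{tpd})--(\ref{re19}) is what ``produces'' the stated factor is therefore backwards: in your argument that step is a pure, avoidable weakening used only to match the stated form of (\ref{re24}); it is the paper's parity-splitting, not your estimate, that forces the larger constant. The only point worth writing out explicitly is the inequality $0\le\alpha^{\underline{\beta}}\le\alpha^{\beta}$ for $\alpha\in\oN^{n}$, which your termwise estimate uses twice; it holds because each $\alpha_{i}^{\underline{\beta_{i}}}$ is either zero or a product of positive integers each at most $\alpha_{i}$.
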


\noindent Note that when $f$ is quadratic, cubic or square-free, we have shown better upper bounds in Theorems \ref{thmqua}, \ref{thmcub} and \ref{thmsf}.\\

\noindent In the proof we will need the following Vandermonde-Chu identity (see \cite{PR01} for a proof, or alternatively use induction on $d\ge1$):
\begin{equation}\label{vcid}
(\sum_{i=1}^n x_i)^{\underline{d}}=\sum_{\alpha\in I(n,d)}{d!\over \alpha!}x^{\underline{\alpha}}\ \ \ \ \forall x\in\oR^n,
\end{equation}
which is an analogue of the multinomial theorem $(\sum_{i=1}^n x_i)^d=\sum_{\alpha\in I(n,d)}{d!\over \alpha!}x^{\alpha}.$

\noindent Now we prove Theorem \ref{thmgeniq}.

\begin{proof}{\em (of Theorem \ref{thmgeniq})} From (\ref{fminrd}), we have
\begin{eqnarray*}
{r^{\underline{d}}\over r^d}f_{\min}^{(r-d)}=\min_{\alpha\in I(n,r)}\left\{\sum_{\beta\in I(n,d)}f_{\beta}{\alpha^{\beta}\over r^d}-\sum_{\beta\in I(n,d)}f_{\beta}{\alpha^{\beta}-\alpha^{\underline{\beta}}\over r^d}\right\}.
\end{eqnarray*}
From this we obtain the inequality:

\begin{equation}\label{re5}
{r^{\underline{d}}\over r^d}f_{\min}^{(r-d)}\ge f_{\Delta(n,r)}-\max_{\alpha\in I(n,r)}\sum_{\beta\in I(n,d)}f_{\beta}{\alpha^{\beta}-\alpha^{\underline{\beta}}\over r^d}.
\end{equation}

\noindent We now focus on the summation $\sum_{\beta\in I(n,d)}f_{\beta}(\alpha^{\beta}-\alpha^{\underline{\beta}})$.

\noindent For any $\beta\in I(n,d)$ and $x\in\oR^n$, we can write the polynomial $x^{\beta}-x^{\underline{\beta}}$ as
\begin{equation}\label{xbeta}
x^{\beta}-x^{\underline{\beta}}=\sum_{\gamma:|\gamma|\le d-1}(-1)^{d-|\gamma|-1}c_{\gamma}^{\beta}x^{\gamma},
\end{equation}
\noindent for some nonnegative scalars $c_{\gamma}^{\beta}$ (which is an analogue of (\ref{tpd})).
We now claim that, for any fixed $k\in[d-1]$, the following identity holds:

\begin{eqnarray}\label{re6}
\sum_{\gamma\in I(n,k)}\sum_{\beta\in I(n,d)}{d!\over\beta!}(-1)^{d-|\gamma|-1}c_{\gamma}^{\beta}x^{\gamma}=(-1)^{d-k-1}a_{d-k}(\sum_{i=1}^nx_i)^{k}.
\end{eqnarray}

\noindent For this, observe that the polynomials at both sides of (\ref{re6}) are homogeneous of degree $k$. Hence (\ref{re6}) will follow if we can show that the equality holds after summing each side over $k\in[d-1]$. In other words, it suffices to show the identity:

$$\sum_{k=1}^{d-1}\sum_{\gamma\in I(n,k)}\sum_{\beta\in I(n,d)}{d!\over\beta!}(-1)^{d-|\gamma|-1}c_{\gamma}^{\beta}x^{\gamma}=\sum_{k=1}^{d-1}(-1)^{d-k-1}a_{d-k}(\sum_{i=1}^nx_i)^{k}.$$

\noindent By the definition of $a_{d-k}$ in (\ref{tpd}), the right side of the above equation is equal to $(\sum_{i=1}^nx_i)^{d}-(\sum_{i=1}^nx_i)^{\underline{d}}$. Hence, we only need to show

\begin{equation}\label{pfre1}
\sum_{k=1}^{d-1}\sum_{\gamma\in I(n,k)}\sum_{\beta\in I(n,d)}{d!\over\beta!}(-1)^{d-|\gamma|-1}c_{\gamma}^{\beta}x^{\gamma}=(\sum_{i=1}^nx_i)^{d}-(\sum_{i=1}^nx_i)^{\underline{d}}.
\end{equation}

\noindent Summing over  (\ref{xbeta}), we  obtain
$$\sum_{\beta\in I(n,d)}{d!\over \beta!}\left(x^{\beta}-x^{\underline{\beta}}\right)
=\sum_{\beta\in I(n,d)}\sum_{\gamma:|\gamma|\le d-1}{d!\over \beta!}(-1)^{d-|\gamma|-1}c_{\gamma}^{\beta}x^{\gamma}=\sum_{k=1}^{d-1}\sum_{\gamma\in I(n,k)}\sum_{\beta\in I(n,d)}{d!\over \beta!}(-1)^{d-|\gamma|-1}c_{\gamma}^{\beta}x^{\gamma}.$$

\noindent
We can now conclude the proof of (\ref{pfre1}) (and thus of (\ref{re6})). Indeed, by using the multinomial theorem and the Vandermonde-Chu identity (\ref{vcid}), we see that
the left-most side in the above relation is equal to  $(\sum_{i=1}^nx_i)^d-(\sum_{i=1}^nx_i)^{\underline{d}}.$


\noindent We partition $[d-1]$ as $[d-1]=I_{o}\cup I_{e}$, where $I_{o}:=\{k:k\in[d-1],\text{$d-k$ is odd}\}$ and $I_{e}:=\{k:k\in[d-1],\text{$d-k$ is even}\}$. Then, from (\ref{xbeta}), the summation $\sum_{\beta\in I(n,d)}f_{\beta}(\alpha^{\beta}-\alpha^{\underline{\beta}})$ becomes
\begin{eqnarray*}
&&\sum_{\beta\in I(n,d)}f_{\beta}(\alpha^{\beta}-\alpha^{\underline{\beta}})
=\sum_{\beta\in I(n,d)}f_{\beta}\sum_{\gamma:|\gamma|\le d-1}(-1)^{d-|\gamma|-1}c_{\gamma}^{\beta}\alpha^{\gamma}\\
&=&\sum_{k=1}^{d-1}\sum_{\gamma\in I(n,k)}\sum_{\beta\in I(n,d)}f_{\beta}(-1)^{d-|\gamma|-1}c_{\gamma}^{\beta}\alpha^{\gamma}\\
&\le& \left(\max_{\beta\in I(n,d)}f_{\beta}{\beta!\over d!}\right)\sum_{k\in I_{o}}\sum_{\gamma\in I(n,k)}\sum_{\beta\in I(n,d)}{d!\over\beta!}c_{\gamma}^{\beta}\alpha^{\gamma}
-\left(\min_{\beta\in I(n,d)}f_{\beta}{\beta!\over d!}\right)\sum_{k\in I_{e}}\sum_{\gamma\in I(n,k)}\sum_{\beta\in I(n,d)}{d!\over\beta!}c_{\gamma}^{\beta}\alpha^{\gamma}.
\end{eqnarray*}

\noindent By (\ref{re6}) we  obtain
\begin{eqnarray*}
\sum_{\beta\in I(n,d)}f_{\beta}(\alpha^{\beta}-\alpha^{\underline{\beta}})\le
\left( \max_{\beta\in I(n,d)}f_{\beta}{\beta!\over d!}\right) \sum_{k\in I_{o}}a_{d-k}(\sum_{i=1}^n\alpha_i)^{k}-
\left(\min_{\beta\in I(n,d)}f_{\beta}{\beta!\over d!}\right) \sum_{k\in I_{e}}a_{d-k}(\sum_{i=1}^n\alpha_i)^{k}.
\end{eqnarray*}

\noindent Combining with (\ref{re5}), we get
\begin{eqnarray*}
r^{\underline{d}}f_{\min}^{(r-d)}&\ge& r^df_{\Delta(n,r)}-\left(\max_{\beta\in I(n,d)}f_{\beta}{\beta!\over d!}\right) \sum_{k\in I_{o}}a_{d-k}r^{k}
+\left(\min_{\beta\in I(n,d)}f_{\beta}{\beta!\over d!}\right) \sum_{k\in I_{e}}a_{d-k}r^{k}.
\end{eqnarray*}

\noindent That is,
\begin{eqnarray*}
r^{\underline{d}}(f_{\Delta(n,r)}-f_{\min}^{(r-d)})
\le (r^{\underline{d}}-r^d)f_{\Delta(n,r)}+ \left(\max_{\beta\in I(n,d)}f_{\beta}{\beta!\over d!}\right) \sum_{k\in I_{o}}a_{d-k}r^{k}
-\left(\min_{\beta\in I(n,d)}f_{\beta}{\beta!\over d!}\right) \sum_{k\in I_{e}}a_{d-k}r^{k}.
\end{eqnarray*}

\noindent Since $r^{\underline{d}}-r^d=\sum_{k=1}^{d-1}(-1)^{d-k}a_{d-k}r^{k}$, we obtain
\begin{eqnarray*}
&&r^{\underline{d}}(f_{\Delta(n,r)}-f_{\min}^{(r-d)})\\
&\le& \sum_{k=1}^{d-1}(-1)^{d-k}a_{d-k}r^{k}f_{\Delta(n,r)}+ \left(\max_{\beta\in I(n,d)}f_{\beta}{\beta!\over d!}\right) \sum_{k\in I_{o}}a_{d-k}r^{k}
- \left(\min_{\beta\in I(n,d)}f_{\beta}{\beta!\over d!}\right) \sum_{k\in I_{e}}a_{d-k}r^{k}\\
&=& \left(\max_{\beta\in I(n,d)}f_{\beta}{\beta!\over d!}\right) \sum_{k\in I_{o}}a_{d-k}r^{k}+f_{\Delta(n,r)}\sum_{k\in I_{e}}a_{d-k}r^{k}
-\left(\min_{\beta\in I(n,d)}f_{\beta}{\beta!\over d!}\right)\sum_{k\in I_{e}}a_{d-k}r^{k}\\
&&-f_{\Delta(n,r)}\sum_{k\in I_{o}}a_{d-k}r^{k}.
\end{eqnarray*}

\noindent According to (\ref{reprop1}), one has $\min_{\beta\in I(n,d)}f_{\beta}{\beta!\over d!}\le f_{\Delta(n,r)}\le \max_{\beta\in I(n,d)}f_{\beta}{\beta!\over d!}$. Therefore, we have
\begin{eqnarray*}
r^{\underline{d}}(f_{\Delta(n,r)}-f_{\min}^{(r-d)})
\le \left(\max_{\beta\in I(n,d)}f_{\beta}{\beta!\over d!}-\min_{\beta\in I(n,d)}f_{\beta}{\beta!\over d!}\right)\sum_{k=1}^{d-1}a_{d-k}r^{k}.
\end{eqnarray*}

\noindent That is,
\begin{eqnarray*}
f_{\Delta(n,r)}-f_{\min}^{(r-d)}\le {\sum_{k=1}^{d-1}a_{d-k}r^{k} \over r^{\underline{d}}}\left(\max_{\beta\in I(n,d)}f_{\beta}{\beta!\over d!}-\min_{\beta\in I(n,d)}f_{\beta}{\beta!\over d!}\right).
\end{eqnarray*}

\noindent Finally, together with Theorem \ref{thmgene} and (\ref{re19}), we can conclude the result of Theorem \ref{thmgeniq}.\qed
\end{proof}

\noindent Now, we compare the following theorem by De Klerk et al. \cite{KLP06} with our new result (\ref{re24}).

\begin{theorem}\label{thmklpgene}\cite[Theorem 1.3]{KLP06}
Suppose $f\in \mathcal{H}_{n,d}$ and $r\ge d$. Then
\begin{equation}\label{re22}
\underline{f}-f_{\min}^{(r-d)}\le \left({r^d\over r^{\underline{d}}}-1\right){2d-1\choose d}d^d(\overline{f}-\underline{f}),
\end{equation}
\begin{equation}\label{re23}
f_{\Delta(n,r)}-\underline{f}\le \left(1-{r^{\underline{d}}\over r^{d}}\right){2d-1\choose d}d^d(\overline{f}-\underline{f}).
\end{equation}
\end{theorem}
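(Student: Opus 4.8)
The plan is to give a single probabilistic argument that yields both (\ref{re22}) and (\ref{re23}) at once, using the Bernstein coefficients $b_\beta:=f_\beta\,\beta!/d!$ as the common currency. The starting observation is that both quantities in the two bounds are \emph{expectations} of the $b_\beta$ under two natural sampling schemes. Indeed, for $x\in\Delta_n$ the multinomial theorem writes $f(x)=\sum_{\beta\in I(n,d)}b_\beta\,\frac{d!}{\beta!}x^{\beta}$ as the expectation of $b_Y$, where $Y\in I(n,d)$ is the colour-composition of $d$ balls drawn \emph{with replacement} from the distribution $x$ on $[n]$ (the weights $\frac{d!}{\beta!}x^{\beta}$ sum to $1$). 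Symmetrically, by the Vandermonde--Chu identity (\ref{vcid}) the weights $\frac{d!}{\beta!}\alpha^{\underline\beta}/r^{\underline d}$ sum to $1$ for every $\alpha\in I(n,r)$, so the quantity minimised in (\ref{fminrd}), namely $\sum_{\beta}f_\beta\alpha^{\underline\beta}/r^{\underline d}=\sum_\beta b_\beta\frac{d!}{\beta!}\alpha^{\underline\beta}/r^{\underline d}$, is the expectation of $b_Y$ where $Y$ is the composition of $d$ balls drawn \emph{without replacement} from an urn containing $\alpha_i$ balls of colour $i$ (total $r$ balls).

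The analytic heart is a comparison of these two schemes on a common urn. Fix $\alpha\in I(n,r)$ and set $x=\alpha/r$; then with-replacement sampling from $x$ is exactly drawing the $r$ physical balls of the urn $\alpha$ with replacement. I would couple the with- and without-replacement draws by generating one sequence of $d$ balls with replacement: with probability precisely $r^{\underline d}/r^d$ (the no-collision probability) the $d$ balls are physically distinct, in which case the sequence is also a uniform without-replacement draw and the two colour-compositions coincide. Hence there is a coupling under which the two compositions disagree with probability at most $1-r^{\underline d}/r^d$, and since $|b_{\beta}-b_{\beta'}|\le\max_\gamma b_\gamma-\min_\gamma b_\gamma$ always holds, I obtain the key estimate
\begin{equation*}
\Bigl|\,f(\alpha/r)-\sum_{\beta\in I(n,d)}f_\beta\tfrac{\alpha^{\underline\beta}}{r^{\underline d}}\,\Bigr|\ \le\ \Bigl(1-\tfrac{r^{\underline d}}{r^d}\Bigr)\Bigl(\max_{\beta}b_\beta-\min_\beta b_\beta\Bigr)\qquad(\alpha\in I(n,r)).
\end{equation*}

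To derive (\ref{re23}) I let $x^\ast$ attain $\underline f$ and sample $\alpha$ from the multinomial distribution with $r$ trials and parameter $x^\ast$, so that $\alpha/r\in\Delta(n,r)$ and $f_{\Delta(n,r)}\le\mathbb{E}[f(\alpha/r)]$. The crucial identity is that drawing $d$ balls without replacement from a random urn that was itself built by $r$ with-replacement draws from $x^\ast$ is the same as drawing $d$ balls directly from $x^\ast$ with replacement (any $d$ of $r$ i.i.d.\ colours are again i.i.d.); hence $\mathbb{E}_\alpha\bigl[\sum_\beta f_\beta\alpha^{\underline\beta}/r^{\underline d}\bigr]=f(x^\ast)=\underline f$. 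Taking expectations in the displayed estimate gives $f_{\Delta(n,r)}-\underline f\le(1-r^{\underline d}/r^d)(\max_\beta b_\beta-\min_\beta b_\beta)$, and Theorem \ref{thmgene} bounds the Bernstein range by $\binom{2d-1}{d}d^d(\overline f-\underline f)$, which is exactly (\ref{re23}). For (\ref{re22}) I instead let $\alpha^\ast$ attain the minimum in (\ref{fminrd}) and use $\underline f\le f(\alpha^\ast/r)$, so that $\underline f-f_{\min}^{(r-d)}\le f(\alpha^\ast/r)-\sum_\beta f_\beta(\alpha^\ast)^{\underline\beta}/r^{\underline d}$; the displayed estimate at $\alpha=\alpha^\ast$ together with Theorem \ref{thmgene} yields $\underline f-f_{\min}^{(r-d)}\le(1-r^{\underline d}/r^d)\binom{2d-1}{d}d^d(\overline f-\underline f)$. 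Since $1-r^{\underline d}/r^d\le r^d/r^{\underline d}-1$, this implies (\ref{re22}) as stated (and is in fact marginally sharper).

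The step I expect to be the main obstacle is making the coupling rigorous: one must verify carefully that, conditioned on the $d$ with-replacement draws being physically distinct, the induced colour-composition follows \emph{exactly} the without-replacement law, and that the no-collision probability equals $r^{\underline d}/r^d$. A secondary point requiring care is the exchange-of-sampling identity used for (\ref{re23}), namely that a without-replacement subsample of an i.i.d.\ urn is again i.i.d.; this is intuitively clear but deserves to be isolated as a small lemma. Everything else — the two multinomial and Vandermonde--Chu normalisations and the final appeal to Theorem \ref{thmgene} — is routine.
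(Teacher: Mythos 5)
Your proof is correct; note first that the paper itself contains no proof of Theorem \ref{thmklpgene} --- the statement is imported verbatim from \cite[Theorem 1.3]{KLP06} --- so your argument is being measured against an external, algebraic proof rather than anything in this text. The probabilistic reading is sound on every point that matters: the multinomial weights ${d!\over\beta!}x^\beta$ and the hypergeometric weights ${d!\over\beta!}\alpha^{\underline\beta}/r^{\underline d}$ are both genuine probability distributions on $I(n,d)$ (the latter sums to one by the Vandermonde--Chu identity (\ref{vcid}) and is nonnegative for integral $\alpha$); the no-collision probability of $d$ with-replacement draws from $r$ physical balls is exactly $r^{\underline d}/r^d$, and conditioning on no collision does produce the uniform without-replacement law, so completing $Z$ on the collision event by an independent hypergeometric draw gives a coupling with $P(Y\neq Z)\le 1-r^{\underline d}/r^d$; and the exchange-of-sampling step for (\ref{re23}) is the standard factorial-moment identity $\mathbb{E}[\alpha^{\underline\beta}]=r^{\underline d}(x^\ast)^\beta$ for multinomial $\alpha$. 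The route in \cite{KLP06} reaches the same two inequalities by comparing $f$ with the discretised polynomial $\sum_\beta f_\beta (rx)^{\underline\beta}/r^{\underline d}$ coefficientwise in the Bernstein basis and then invoking Theorem \ref{thmgene}; your coupling is essentially a probabilistic repackaging of that comparison, and indeed of the same mechanism the paper uses in the proof of Theorem \ref{thmgeniq}, where the difference $\alpha^\beta/r^d-\alpha^{\underline\beta}/r^{\underline d}$ is controlled term by term. What your version buys is a single estimate that yields both (\ref{re22}) and (\ref{re23}) at once, with the cleaner and marginally sharper factor $1-r^{\underline d}/r^d$ in place of $r^d/r^{\underline d}-1$ in (\ref{re22}); what it does not buy is any improvement where this paper actually works, since the sharpened bound on $\underline f-f_{\min}^{(r-d)}$ still passes through the $\binom{2d-1}{d}d^d$ constant of Theorem \ref{thmgene} and so does not affect the comparison in Lemma \ref{claim1}.
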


\noindent 
By adding up (\ref{re22}) and (\ref{re23}), we obtain
\begin{equation}\label{re25}
f_{\Delta(n,r)}-f_{\min}^{(r-d)}\le \left({r^d\over r^{\underline{d}}}-{r^{\underline{d}}\over r^d}\right){2d-1\choose d}d^d(\overline{f}-\underline{f}).
\end{equation}
\begin{lemma}\label{claim1}
When $r$ is large enough, the upper bound (\ref{re24}) refines the upper bound (\ref{re25}).
\end{lemma}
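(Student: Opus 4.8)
\noindent The plan is to reduce the comparison of the two bounds to a single polynomial inequality in $r$ and then analyze its leading behaviour. Both bounds (\ref{re24}) and (\ref{re25}) carry the common nonnegative factor ${2d-1\choose d}d^d(\overline{f}-\underline{f})$, so after dividing it out it suffices to show that the coefficient
$$c_{\text{new}}(r):={(r+d-1)^{\underline{d}}-r^d\over r^{\underline{d}}}$$
does not exceed the coefficient
$$c_{\text{old}}(r):={r^d\over r^{\underline{d}}}-{r^{\underline{d}}\over r^d}.$$
(The case $d=1$ is trivial, since then $(r+d-1)^{\underline{d}}=r=r^d=r^{\underline d}$ and both coefficients vanish, so I would assume $d\ge 2$.) First I would clear denominators: since $r^{\underline{d}}>0$ and $r^d>0$ for $r\ge d$, the inequality $c_{\text{new}}(r)\le c_{\text{old}}(r)$ is equivalent to the nonnegativity of the numerator of $c_{\text{old}}(r)-c_{\text{new}}(r)$ over the common denominator $r^{\underline{d}}r^d$, namely
$$P(r):=2r^{2d}-(r^{\underline{d}})^2-r^d(r+d-1)^{\underline{d}}.$$

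\noindent Next I would estimate $P(r)$ asymptotically by expanding the two factorial products. Writing $r^{\underline{d}}=\prod_{j=0}^{d-1}(r-j)=r^d-{d\choose 2}r^{d-1}+O(r^{d-2})$ and $(r+d-1)^{\underline{d}}=\prod_{k=0}^{d-1}(r+k)=r^d+{d\choose 2}r^{d-1}+O(r^{d-2})$, I would substitute these into $P(r)$. The key phenomenon is that the $r^{2d}$ terms cancel exactly, so that
$$P(r)={d\choose 2}r^{2d-1}+O(r^{2d-2}).$$
Since ${d\choose 2}\ge 1$ for $d\ge 2$, this shows $P(r)>0$ for all sufficiently large $r$, whence $c_{\text{new}}(r)< c_{\text{old}}(r)$ and the bound (\ref{re24}) refines (\ref{re25}).

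\noindent The main obstacle I anticipate is precisely the cancellation of the top-order term: a naive comparison of the dominant $r^{2d}$ contributions of $(r^{\underline d})^2$ and $r^d(r+d-1)^{\underline d}$ is inconclusive, so the argument must be pushed to the next order. This requires tracking the $r^{2d-1}$ coefficient of each of these two products, for which I would use the fact that the coefficient of $r^{d-1}$ in $r^{\underline{d}}$ equals $-\sum_{j=0}^{d-1}j=-{d\choose 2}$, while that in $(r+d-1)^{\underline{d}}$ equals $+\sum_{k=0}^{d-1}k=+{d\choose 2}$. These \emph{opposite} signs are exactly what produces the positive residual leading term ${d\choose 2}r^{2d-1}$ in $P(r)$ after the $r^{2d}$ cancellation, and hence the claimed refinement for large $r$.
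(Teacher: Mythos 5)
Your proposal is correct and follows essentially the same route as the paper: both arguments reduce the comparison to the observation that the leading terms cancel and the next-order coefficients are $2\binom{d}{2}$ versus $\binom{d}{2}$ (the paper writes these as $2a_1$ versus $a_1$ with $a_1$ defined via (\ref{tpd}), and works over the denominator $r^{\underline{d}}$ rather than the fully cleared $r^{\underline{d}}r^d$, but the computation is the same). Your explicit identification of the $r^{d-1}$ coefficients of $r^{\underline{d}}$ and $(r+d-1)^{\underline{d}}$ as $\mp\binom{d}{2}$ is a nice concrete touch, and your use of $(r+d-1)^{\underline{d}}-r^d$ in place of $\sum_{k=1}^{d-1}a_{d-k}r^k$ is justified by (\ref{re19}).
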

\begin{proof}
It suffices to show that ${r^d\over r^{\underline{d}}}-{r^{\underline{d}}\over r^d}$ is larger than ${\sum_{k=1}^{d-1}a_{d-k}r^{k} \over r^{\underline{d}}}$ when $r$ is sufficiently large.
Since ${r^d\over r^{\underline{d}}}-{r^{\underline{d}}\over r^d}=(r^d-{(r^{\underline{d}})^2\over r^d})/r^{\underline{d}}$, we only need to compare $r^d-{(r^{\underline{d}})^2\over r^d}$ and $\sum_{k=1}^{d-1}a_{d-k}r^{k}$. For the term $r^d-{(r^{\underline{d}})^2\over r^d}$, one can check that the coefficient of $r^d$ is $0$ and the coefficient of $r^{d-1}$ is $2a_{1}>0$. On the other hand, in the summation ${\sum_{k=1}^{d-1}a_{d-k}r^{k}}$, the coefficient of $r^{d-1}$ is $a_{1}>0$. Therefore, when $r$ is sufficiently large, $r^d-{(r^{\underline{d}})^2\over r^d}$ is larger than $\sum_{k=1}^{d-1}a_{d-k}r^{k}$, by which we conclude the proof.\qed
\end{proof}

\noindent We illustrate the result in Lemma \ref{claim1} in the case of quartic polynomials.
\begin{example}\label{egdeg4}
Consider a polynomial $f\in\mathcal{H}_{n,4}$ written as
\begin{eqnarray*}
f&=&\sum_{i=1}^nf_ix_i^4+\sum_{i<j}\left(f_{ij}x_i^3x_j+g_{ij}x_i^2x_j^2+h_{ij}x_ix_j^3\right)+\sum_{i<j<k}(f_{ijk}x_i^2x_jx_k\\
&+&g_{ijk}x_ix_j^2x_k
+h_{ijk}x_ix_jx_k^2)
+\sum_{i<j<k<l}f_{ijkl}x_ix_jx_kx_l.
\end{eqnarray*}
\noindent In this case, (\ref{re24}) reads
\begin{equation}\label{re7}
f_{\Delta(n,r)}-f_{\min}^{(r-4)}\le {6r^2+11r+6\over (r-1)(r-2)(r-3)}{7\choose 4}4^4 (\overline{f}-\underline{f}),
\end{equation}
\noindent while (\ref{re25}) reads
\begin{equation}\label{re26}
f_{\Delta(n,r)}-f_{\min}^{(r-4)}\le {12r^2-58r+144-{193\over r}+{132\over r^2}-{36\over r^3}\over (r-1)(r-2)(r-3)}{7\choose 4}4^4 (\overline{f}-\underline{f}).
\end{equation}
\noindent One can check that (\ref{re7}) refines (\ref{re26}) when $r\ge 10$.
\end{example}

\begin{remark}
We now consider the convergence rate of the sequence
$$\alpha_r:={f_{\Delta(n,r)}-f_{\min}^{(r-d)}\over \overline{f}-\underline{f}}\ \ \ \ r=1,2,\dots$$

\noindent Suppose the degree of $f$ is fixed. By (\ref{re24}), we have $\alpha_r=O({1\over r})$. As in Example \ref{egsf}, $\alpha_r=\Omega({1\over r})$ holds, we can conclude that the dependence of $\alpha_r$ on $r$ in (\ref{re24}) is tight, in the sense that there does not exist any $\epsilon>0$ such that $\alpha_r=O({1\over r^{1+\epsilon}})$.

\noindent In \cite{KLS13}, De Klerk et al. consider the convergence rate of the sequence

$$\beta_r:={{f_{\Delta(n,r)}-\underline{f}}\over \overline{f}-\underline{f}}\ \ \ \ r=1,2,\dots$$

\noindent They consider several examples, and all of them satisfy $\beta_r=O({1\over r^2})$.
However, it is still an open question to determine the asymptotic convergence rate of $\beta_r$ in general.

\end{remark}

\section*{Acknowledgements}
\noindent The author is grateful to M. Laurent and E. de Klerk for useful discussions and for their help to improve the presentation of this paper. The author also thanks the anonymous reviewers for useful remarks.

\end{document}